\DeclarePairedDelimiter{\ceil}{\lceil}{\rceil}
\newtheorem{problem}{Problem}
\newtheorem{theorem}{Theorem}
\newtheorem{lemma}{Lemma}
\newtheorem{remark}{Remark}
\newtheorem{definition}{Definition}
\newtheorem{example}{Example}
\title{\LARGE \bf
On the Complexity and Approximability of Optimal Sensor Selection for Kalman Filtering
}
\author{Lintao Ye, Sandip Roy and Shreyas Sundaram
\thanks{This research was supported by NSF grant CMMI-1635014. Lintao Ye and Shreyas Sundaram are with the School of Electrical
and Computer Engineering at Purdue University. Email: \{ye159,sundara2\}@purdue.edu.  Sandip Roy is with the School of Electrical and Computer Engineering and Computer Science at Washington State University.  Email: sroy@eecs.wsu.edu.
}
}
\begin{document}

\maketitle
\thispagestyle{empty}
\pagestyle{empty}

\begin{abstract}
Given a linear dynamical system, we consider the problem of selecting (at design-time) an optimal set of sensors (subject to certain budget constraints) to minimize the trace of the steady state error covariance matrix of the Kalman filter. Previous work has shown that this problem is NP-hard for certain classes of systems and sensor costs; in this paper, we show that the problem remains NP-hard even for the special case where the system is stable and all sensor costs are identical.  Furthermore, we show the stronger result that there is no constant-factor (polynomial-time) approximation algorithm for this problem. This contrasts with other classes of sensor selection problems studied in the literature, which typically pursue constant-factor approximations by leveraging greedy algorithms and submodularity of the cost function. Here, we provide a specific example showing that greedy algorithms can perform arbitrarily poorly for the problem of design-time sensor selection for Kalman filtering.
\end{abstract}

\section{Introduction}

Selecting an appropriate set of actuators or sensors in order to achieve certain performance requirements is an important problem in control system design (e.g., \cite{van2001review}, \cite{olshevsky2014minimal}, \cite{liu2003problem}). For instance, in the case of linear Gauss-Markov systems, researchers have studied techniques to select sensors dynamically (at run-time) or statically (at design-time) in order to minimize certain metrics of the error covariance of the corresponding Kalman Filter. These  are known as sensor scheduling problems (e.g., \cite{vitus2012efficient}, \cite{huber2012optimal}, \cite{jawaid2015submodularity}) and design-time sensor selection problems (e.g., \cite{chmielewski2002theory}, \cite{dhingra2014admm}, \cite{tzoumas2016sensor}, \cite{zhang2017sensor}), respectively. These problems are NP-hard in general (e.g., \cite{zhang2017sensor}), and various approximation algorithms have been proposed to solve them. For example, the concept of submodularity \cite{nemhauser1978analysis} has been widely used to analyze the performance of greedy algorithms for sensor scheduling and selection (e.g., \cite{krause2008near}, \cite{shamaiah2010greedy}, \cite{jawaid2015submodularity}, \cite{summers2016submodularity}).

In this paper, we consider the design-time sensor selection problem for optimal filtering of discrete-time linear dynamical systems. We study the problem of choosing a subset of sensors (under given budget constraints) to optimize the steady state error covariance of the corresponding Kalman filter.  We refer to this problem as the {\it Kalman filtering sensor selection (KFSS)} problem. We summarize some related work as follows.

In \cite{chmielewski2002theory}, the authors considered the design-time sensor selection problem of a sensor network for discrete-time linear dynamical systems, also known as dynamic data-reconciliation problems. They assumed that each sensor measures one component of the system state and the measured and unmeasured states are related via network defined mass-balance functions. The objective is to minimize the  cost of implementing the network configuration subject to certain performance criteria. They transformed the problem into convex optimization problems, but did not give complexity analysis of the problem. In contrast, we consider the problem of minimizing the estimation error under a cardinality constraint on the chosen sensors without network configuration and analyze the complexity of the problem.

In \cite{tzoumas2016sensor}, the authors studied the design-time sensor selection problem for discrete-time linear time-varying systems over a finite time horizon, under the assumption that each sensor measures one component of the system state vector. The objective is to minimize the number of chosen sensors while guaranteeing a certain level of performance (or alternatively, to minimize the estimation error with a cardinality constraint on the chosen sensors). In contrast, we consider general measurement matrices and aim to minimize the steady state estimation error.  

The papers \cite{yang2015deterministic} and \cite{zhang2017sensor} considered the same design-time sensor selection as the one we consider here. In \cite{yang2015deterministic}, the authors expressed the problem as a semidefinite program (SDP). However, they did not provide theoretical guarantees on the performance of the proposed algorithm. The paper \cite{zhang2017sensor} showed that the problem is NP-hard and gave examples showing that the cost function is not submodular in general. The authors also provided upper bounds on the performance of algorithms for the problem; these upper bounds were functions of the system matrices. Although \cite{zhang2017sensor} showed via simulations that greedy algorithms performed well for several randomly generated systems, the question of whether such algorithms (or other polynomial-time algorithms) could provide constant-factor approximation ratios for the problem was left open.

Our contributions to this problem are as follows. First, we show that the KFSS problem is NP-hard {\it even} for the special case when the system is stable and all sensors have the same cost. This complements and strengthens the complexity result in \cite{zhang2017sensor}, which only showed NP-hardness for two subclasses of problem instances: (1) when the system is unstable and the sensor costs are identical, and (2) when the system is stable but the sensor costs are arbitrary.  The NP-hardness of those cases followed in a relatively straightforward manner via reductions from the minimal controllability \cite{olshevsky2014minimal} and knapsack \cite{garey1979computers} problems, respectively.  In contrast, the stronger NP-hardness proof that we provide here requires a more careful analysis, and makes connections to finding sparse solutions to linear systems of equations, yielding new insights into the problem.

After establishing NP-hardness of the problem as above, our second (and most significant) contribution is to show that {\it there is no constant factor approximation algorithm for this problem} (unless $P = NP$).  In other words, there is no polynomial-time algorithm that can find a sensor selection that is always guaranteed to yield a mean square estimation error (MSEE) that is within any constant finite factor of the MSEE for the optimal selection.  This stands in stark contrast to other sensor selection problems studied in the literature, which leveraged submodularity of their associated cost functions to provide greedy algorithms with constant-factor approximation ratios \cite{tzoumas2016sensor}.

Our inapproximability result above immediately implies that greedy algorithms cannot provide constant-factor guarantees for our problem.  Our third contribution in this paper is to explicitly show how greedy algorithms can provide arbitrarily poor performance even for very small instances of the KFSS problem (i.e., in systems with only three states and three sensors to choose from).  

The rest of this paper is organized as follows. In Section \ref{sec:problem formulation}, we formulate the KFSS problem. In Section \ref{sec:complexity analysis}, we analyze the complexity of the KFSS problem. In Section \ref{sec:greedy example}, we study a greedy algorithm for the KFSS problem and analyze its performance. In Section \ref{sec:conclusion}, we conclude the paper.

\subsection{Notation and terminology}
The set of natural numbers, integers, real numbers, rational numbers, and complex numbers are denoted as $\mathbb{N}$, $\mathbb{Z}$, $\mathbb{R}$, $\mathbb{Q}$ and $\mathbb{C}$, respectively. For any $x\in\mathbb{R}$, denote $\ceil{x}$ as the least integer greater than or equal to $x$. For a square matrix $P\in\mathbb{R}^{n\times n}$, let $P^T$, $\textrm{trace}(P)$, $\textrm{det}(P)$, $\{\lambda_i(P)\}$ and $\{\sigma_i(P)\}$ be its transpose, determinant, set of eigenvalues and set of singular values, respectively. The set of eigenvalues $\{\lambda_i(P)\}$ of $P$ are ordered with nondecreasing magnitude, i.e., $|\lambda_1(P)|\ge\dots\ge|\lambda_n(P)|$; the same order applies to the set of singular values $\{\sigma_i(P)\}$. Denote $P_{ij}$ as the element in the  $i$th row and $j$th column of $P$.  A positive definite (resp. positive semi-definite) matrix $P$ is denoted as $P\succ0$ (resp. $P\succeq0$), and $P\succeq Q$ if $P-Q\succeq0$. The set of $n$ by $n$ positive definite (resp. positive semi-definite) matrices is denoted as $\mathbb{S}_{++}^n$ (resp. $\mathbb{S}_{+}^n$). The identity matrix with dimension $n$ is denoted as $I_{n\times n}$. For a vector $v$, denote $v_i$  as the $i$th element of $v$ and let $\textrm{supp}(v)$ be its support, where $\textrm{supp}(v)=\{i:v_i\neq 0\}$.  Denote the Euclidean norm of $v$ by $\lVert v\lVert_2$. Define $\mathbf{e}_i$ to be a row vector where the $i$th element is $1$ and all the other elements are zero; the dimension of the vector can be inferred from the context. For a random variable $\omega$, let $\mathbb{E}(\omega)$ be its expectation. For a set $\mathcal{A}$, let $|\mathcal{A}|$ be its cardinality.

\section{Problem Formulation} \label{sec:problem formulation}
Consider the discrete-time linear system
\begin{equation}
x[k+1] = Ax[k] + w[k],
\label{eqn:system dynamics}
\end{equation}
where $x[k]\in\mathbb{R}^n$ is the system state, $w[k]\in\mathbb{R}^n$ is a zero-mean white Gaussian noise process with $\mathbb{E}[w[k](w[k])^T]=W$ for all $k\in\mathbb{N}$, and $A\in\mathbb{R}^{n\times n}$ is the system dynamics matrix. We assume throughout this paper that the pair $(A,W^{\frac{1}{2}})$ is stabilizable.

Consider a set $\mathcal{Q}$ consisting of $q$ sensors. Each sensor $i\in\mathcal{Q}$ provides a measurement of the system in the form
\begin{equation}
\label{eqn:single sensor measurement}
y_i[k]=C_ix[k]+v_i[k],
\end{equation}
where $C_i\in\mathbb{R}^{s_i\times n}$ is the state measurement matrix for sensor $i$, and $v_i[k]\in\mathbb{R}^{s_i}$ is a zero-mean white Gaussian noise process. We further define $y[k]\triangleq\big[(y_1[k])^T\ \cdots \ (y_q[k])^T\big]^T$, $C\triangleq\big[C_1^T\ \cdots\ C_q^T\big]^T$ and $v[k]\triangleq\big[(v_1[k])^T\ \cdots \ (v_q[k])^T\big]^T$. Thus, the output provided by all sensors together is given by
\begin{equation}
\label{eqn:all sensors measurements}
y[k]=C x[k]+v[k],
\end{equation}
where $C\in\mathbb{R}^{s\times n}$ and $s=\sum_{i=1}^q s_i$. We denote $\mathbb{E}[v[k](v[k])^T]=V$ and consider $\mathbb{E}[v[k](w[j])^T]=\mathbf{0}$, $\forall k, j\in\mathbb{N}$. 

Consider that there are no sensors initially deployed on the system. Instead, the system designer must select a subset of sensors from the set $\mathcal{Q}$ to install. Each sensor $i\in\mathcal{Q}$ has a cost $b_{i}\in\mathbb{R}_{\ge0}$; define the cost vector $b\triangleq \left[\begin{matrix}b_1 & \cdots & b_q\end{matrix}\right]^T$. The designer has a budget $B\in\mathbb{R}_{\ge0}$, representing the total cost that can be spent on sensors from $\mathcal{Q}$.

After a set of sensors is selected and installed, the Kalman filter is then applied to provide an optimal estimate of the states using the measurements from the installed sensors (in the sense of minimizing the MSEE). We define a vector $\mu\in\{0,1\}^q$ as the indicator vector of the selected sensors, where $\mu_i=1$ if and only if sensor $i\in\mathcal{Q}$ is installed. Denote $C(\mu)$ as the measurement matrix of the installed sensors indicated by $\mu$, i.e., $C(\mu)\triangleq\left[\begin{matrix} C_{i_1}^T &  \cdots & C_{i_p}^T \end{matrix}\right]^T$, where $\textrm{supp}(\mu)=\{i_1,\dots,i_p\}$. Similarly, denote $V(\mu)$ as the measurement noise covariance matrix of the installed sensors, i.e., $V(\mu)=\mathbb{E}[\tilde{v}[k](\tilde{v}[k])^T]$, where $\tilde{v}[k]=\big[(v_{i_1}[k])^T\ \cdots \ (v_{i_p}[k])^T\big]^T$. Let $\Sigma_{k|k-1}(\mu)$ and $\Sigma_{k|k}(\mu)$ denote the {\it a priori} error covariance matrix and the 
{\it a posteriori} error covariance matrix of the Kalman filter at time step $k$, respectively, when the sensors indicated by $\mu$ are installed.  We will use the following result \cite{anderson1979optimal}.

\begin{lemma}
Suppose the pair $(A,W^{\frac{1}{2}})$ is stabilizable. For a given indicator vector $\mu$, both $\Sigma_{k|k-1}(\mu)$ and $\Sigma_{k|k}(\mu)$ will converge to finite limits $\Sigma(\mu)$ and $\Sigma^*(
\mu)$, respectively, as $k\to\infty$ if and only if the pair $(A,C(\mu))$ is detectable.
\label{lemma:Anderson optimal filtering}
\end{lemma}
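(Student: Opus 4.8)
The plan is to reduce the claim to a convergence analysis of the Riccati difference equation (RDE) governing the a priori covariance, and to treat the two implications separately. Writing $C=C(\mu)$ and $V=V(\mu)$ for brevity, I would first combine the measurement update $\Sigma_{k|k}=\Sigma_{k|k-1}-\Sigma_{k|k-1}C^T(C\Sigma_{k|k-1}C^T+V)^{-1}C\Sigma_{k|k-1}$ with the time update $\Sigma_{k+1|k}=A\Sigma_{k|k}A^T+W$ into the single recursion
\begin{equation*}
\Sigma_{k+1|k}=A\Sigma_{k|k-1}A^T+W-A\Sigma_{k|k-1}C^T(C\Sigma_{k|k-1}C^T+V)^{-1}C\Sigma_{k|k-1}A^T\triangleq\phi(\Sigma_{k|k-1}).
\end{equation*}
Since the measurement update is a continuous map of $\Sigma_{k|k-1}$, convergence of $\{\Sigma_{k|k}\}$ to a limit $\Sigma^*(\mu)$ follows from convergence of $\{\Sigma_{k|k-1}\}$ to $\Sigma(\mu)$, so it suffices to analyze the a priori sequence.

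For the ``if'' direction I would exploit the variational representation $\phi(P)=\min_{L}\big[(A-LC)P(A-LC)^T+W+LVL^T\big]$, where the minimum (in the positive semi-definite order) is attained at the Kalman gain $L=APC^T(CPC^T+V)^{-1}$, as one checks by completing the square. This representation yields two facts at once. First, monotonicity: if $0\preceq P\preceq P'$ then each bracketed term is monotone in $P$ for fixed $L$, so $\phi(P)\preceq\phi(P')$; initializing at $\Sigma_{0|-1}=0$ therefore produces a monotonically nondecreasing sequence of positive semi-definite matrices. Second, an upper bound: detectability of $(A,C)$ provides some $L$ with $A-LC$ Schur stable, and plugging this fixed $L$ into the bracket shows $\Sigma_{k|k-1}\preceq\bar\Sigma_k$, where $\bar\Sigma_k$ solves the stable Lyapunov recursion $\bar\Sigma_{k+1}=(A-LC)\bar\Sigma_k(A-LC)^T+W+LVL^T$ and hence converges to a finite limit. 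A monotone, bounded sequence of symmetric matrices converges, and by continuity of $\phi$ the limit $\Sigma(\mu)$ solves the algebraic Riccati equation $\Sigma=\phi(\Sigma)$.

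Two points then remain. To obtain convergence from the actual (rather than zero) initialization and to guarantee the limit is independent of it, I would use stabilizability of $(A,W^{\frac{1}{2}})$ to show that the closed-loop matrix $A-KC$ associated with the limiting Kalman gain $K$ is Schur stable, and then turn this into a contraction estimate on the difference between any two Riccati trajectories, giving global convergence to the unique stabilizing solution $\Sigma(\mu)$. For the ``only if'' direction I would argue the contrapositive: if $(A,C)$ is not detectable, a Kalman observability decomposition isolates an unobservable subsystem whose dynamics matrix has an eigenvalue $\lambda$ with $|\lambda|\ge 1$ (by non-detectability, via the PBH test) and which is excited by the process noise (forced by stabilizability of $(A,W^{\frac{1}{2}})$, since an unstable, unexcited mode would violate it). On this subspace the measurements carry no information, so the error covariance obeys an unstable, noise-driven Lyapunov recursion and diverges, precluding convergence.

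The main obstacle is the global convergence/uniqueness claim for arbitrary initial conditions: the monotonicity argument only handles the zero initialization, and extending convergence to every positive semi-definite starting point is exactly where stabilizability of $(A,W^{\frac{1}{2}})$ (as opposed to mere detectability of $(A,C)$) must be invoked, through the Schur stability of $A-KC$ and a careful contraction estimate. Because this is the classical discrete-time Riccati convergence theorem, in the paper I would simply cite \cite{anderson1979optimal} rather than reproduce the full argument.
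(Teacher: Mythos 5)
Your proposal is correct and consistent with the paper, which does not prove this lemma at all: it simply cites the classical discrete-time Riccati convergence theorem from \cite{anderson1979optimal}, and your sketch (monotone convergence from the zero initialization plus a detectability-based Lyapunov upper bound for existence, stabilizability of $(A,W^{\frac{1}{2}})$ for Schur stability of the closed loop and hence uniqueness and convergence from arbitrary initializations, and a noise-excited unstable unobservable mode for the necessity direction) is exactly the standard argument in that reference, matching your own closing decision to cite rather than reproduce it. The one caveat worth noting is that the paper allows $V=\mathbf{0}$ with inverses interpreted as pseudo-inverses, whereas your completing-the-square and continuity steps (like the classical statement) implicitly assume $C\Sigma C^T+V$ is invertible, so the singular case would need a brief additional remark if the proof were written out in full.
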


The limit $\Sigma(\mu)$ satisfies the \textit{discrete algebraic Riccati equation (DARE)} \cite{anderson1979optimal}:
\begin{equation}
\label{eqn:DARE}
\begin{split}
&\Sigma(\mu)=A\Sigma(\mu)A^T+W - \\
&A\Sigma(\mu)C(\mu)^T\big(C(\mu)\Sigma(\mu)C(\mu)^T+V(\mu)\big)^{-1}C(\mu)\Sigma(\mu)A^T.
\end{split}
\end{equation}
Applying the matrix inversion lemma \cite{horn1985matrix}, we can rewrite Eq. (\ref{eqn:DARE}) as
\begin{equation}
\label{eqn:DARE after matrix inverse lemma}
\Sigma(\mu)=W+A(\Sigma^{-1}(\mu)+R(\mu))^{-1}A^T,
\end{equation}
where $R(\mu)\triangleq C(\mu)^TV(\mu)^{-1}C(\mu)$ is the sensor information matrix corresponding to sensor selection indicated by $\mu$. Note that the inverses in Eq. $\eqref{eqn:DARE}$ and Eq. $\eqref{eqn:DARE after matrix inverse lemma}$ are interpreted as pseudo-inverses if the arguments are not invertible. For the case when $V=\mathbf{0}$, the matrix inverse lemma does not hold under pseudo-inverse (unless $\mu=\mathbf{0}$), we compute $\Sigma(\mu)$ via Eq. $\eqref{eqn:DARE}$.

The limits $\Sigma(\mu)$ and $\Sigma^*(\mu)$ are coupled as \cite{catlin1989estimation}:
\begin{equation}
\label{eqn:couple priori and posteriori}
\Sigma(\mu)=A\Sigma^*(\mu)A^T+W.
\end{equation}
For the case when the pair $(A,C(\mu))$ is not detectable, we define the limit $\Sigma(\mu)=+\infty$. The Kalman filter sensor selection (KFSS) problem is defined as follows.

\begin{problem}
\label{pro:priori KFSS problem}
(KFSS Problem) Given a system dynamics matrix $A\in\mathbb{R}^{n\times n}$, a measurement matrix $C\in\mathbb{R}^{s\times n}$ containing all of the individual sensor measurement matrices, a system noise covariance matrix $W\in\mathbb{S}_+^n$, a sensor noise covariance matrix $V\in\mathbb{S}_+^{s}$, a cost vector $b\in\mathbb{R}^q_{\ge0}$ and a budget $B\in\mathbb{R}_{\ge0}$, the Kalman filtering sensor selection problem is to find the sensor selection $\mu$, i.e., the indicator vector $\mu$ of the selected sensors, that solves
\begin{equation*}
\begin{split}
&\mathop{\min}_{\mu}\ \text{trace}(\Sigma(\mu))\\
&s.t.\ b^T \mu\le B\\
&\mu\in\{0,1\}^q
\end{split}
\end{equation*}
where $\Sigma(\mu)$ is given by Eq. $(\ref{eqn:DARE})$ if the pair $(A,C(\mu))$ is detectable, and $\Sigma(\mu)=+\infty$, otherwise.
\label{pro: the KFSS problem}
\end{problem}

\section{Complexity Analysis}\label{sec:complexity analysis}
As described in the Introduction, the KFSS problem was shown to be NP-hard in \cite{zhang2017sensor} for two classes of systems and sensor costs.  First, when the $A$ matrix is unstable, the set of chosen sensors must cause the resulting system to be detectable in order to obtain a finite steady state error covariance matrix.  Thus, for systems with unstable $A$ and identical sensor costs, \cite{zhang2017sensor} provided a reduction from the ``minimal controllability'' (or minimal detectability) problem considered in \cite{olshevsky2014minimal} to the KFSS problem.  Second, when the $A$ matrix is stable (so that all sensor selections cause the system to be detectable), \cite{zhang2017sensor} showed that when the sensor costs can be arbitrary, the $0-1$ knapsack problem can be encoded as a special case of the KFSS problem, thereby again showing NP-hardness of the latter problem.

In this section, we provide a stronger result and show that the KFSS problem is NP-hard even for the special case where the $A$ matrix is stable {\it and} all sensors have the same cost.  Hereafter, it will suffice for us to consider the case when $C_i\in\mathbb{R}^{1\times n}$, $\forall i\in\{1,\dots,q\}$, i.e., each sensor corresponds to one row of matrix $C$, and the sensor selection cost vector is $b=[1\ \cdots\ 1]^T$, i.e., each sensor has cost equal to $1$.

We will use the following results in our analysis  (the proofs are provided in the appendix).

\begin{lemma}
Consider a discrete-time linear system as defined in $\eqref{eqn:system dynamics}$ and $\eqref{eqn:all sensors measurements}$. Suppose the system dynamics matrix is of the form $A=\textrm{diag}(\lambda_1,\dots,\lambda_n)$ with $0\le |\lambda_i|<1$, $\forall i \in \{1,\dots,n\}$, the system noise covariance matrix $W$ is diagonal, and the sensor noise covariance matrix is $V=\mathbf{0}$. Then, the following holds for all sensor selections $\mu$.

\begin{enumerate}
\item $\forall i\in\{1,\dots,n\}$, $(\Sigma(\mu))_{ii}$ satisfies 
\begin{equation}
W_{ii}\le (\Sigma(\mu))_{ii}\le \frac{W_{ii}}{1-\lambda_i^2}.
\label{eqn:bound for priori sigma}
\end{equation}
\item If $\exists i\in\{1,\dots,n\}$ such that $W_{ii}=0$, then $(\Sigma(\mu))_{ii}=0$. 

\item If $\exists i\in\{1,\dots,n\}$ such that $\lambda_i=0$, then $(\Sigma(\mu))_{ii}=W_{ii}$. 

\item If $\exists i\in\{1,\dots,n\}$ such that $W_{ii}\neq0$ and the $i$th column of $C(\mu)$ is zero, then $(\Sigma(\mu))_{ii}=\frac{W_{ii}}{1-\lambda_i^2}$.

\item If $\exists i\in\{1,\dots,n\}$ such that $\mathbf{e}_i\in\textrm{rowspace}(C(\mu))$, then $(\Sigma(\mu))_{ii}=W_{ii}$.
\end{enumerate}
\label{Lemma:minimum trace of sigma}
\end{lemma}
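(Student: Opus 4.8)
The plan is to build every claim on two relations: the time-update (coupling) identity $\Sigma(\mu)=A\Sigma^*(\mu)A^T+W$ from \eqref{eqn:couple priori and posteriori}, and the measurement-update identity $\Sigma^*(\mu)=\Sigma(\mu)-\Sigma(\mu)C(\mu)^T\big(C(\mu)\Sigma(\mu)C(\mu)^T+V(\mu)\big)^{+}C(\mu)\Sigma(\mu)$, where the inverse is read as a pseudo-inverse since $V=\mathbf 0$. I will use two structural facts about $\Sigma^*(\mu)$ throughout: it is a limit of error covariance matrices, so $\Sigma^*(\mu)\succeq 0$; and the subtracted term $\Sigma(\mu)C(\mu)^T(\cdots)^{+}C(\mu)\Sigma(\mu)=(C(\mu)\Sigma(\mu))^T(\cdots)^{+}(C(\mu)\Sigma(\mu))$ is positive semidefinite, so $\Sigma^*(\mu)\preceq\Sigma(\mu)$. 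Because every $|\lambda_i|<1$, the matrix $A$ is Schur-stable and $(A,C(\mu))$ is detectable for every $\mu$; hence by Lemma \ref{lemma:Anderson optimal filtering} the limits exist and the DARE \eqref{eqn:DARE} has a unique stabilizing positive semidefinite solution. The diagonal structure of $A$ and $W$ lets me extract individual entries: since $A\mathbf e_i^T=\lambda_i\mathbf e_i^T$ and $W\mathbf e_i^T=W_{ii}\mathbf e_i^T$, the coupling identity yields the scalar relation $(\Sigma(\mu))_{ii}=\lambda_i^2(\Sigma^*(\mu))_{ii}+W_{ii}$, which is the engine for parts (a)--(c).

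For part (a), the lower bound is immediate from this scalar relation together with $(\Sigma^*(\mu))_{ii}\ge 0$. For the upper bound I substitute $(\Sigma^*(\mu))_{ii}\le(\Sigma(\mu))_{ii}$ into the same relation to obtain $(\Sigma(\mu))_{ii}\le\lambda_i^2(\Sigma(\mu))_{ii}+W_{ii}$ and solve for $(\Sigma(\mu))_{ii}$. Parts (b) and (c) then follow as one-line corollaries: if $W_{ii}=0$ the two bounds in \eqref{eqn:bound for priori sigma} coincide at $0$, and if $\lambda_i=0$ the coupling relation reads $(\Sigma(\mu))_{ii}=W_{ii}$ directly.

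For part (e), the idea is that with $V=\mathbf 0$ a linearly measured state is known exactly after the update, so its a posteriori variance vanishes. Writing $\mathbf e_i=\eta\,C(\mu)$ for some row vector $\eta$ (possible since $\mathbf e_i\in\textrm{rowspace}(C(\mu))$), I get $\mathbf e_i^T=C(\mu)^T\eta^T$, so that $(\Sigma^*(\mu))_{ii}=\eta\,C(\mu)\Sigma^*(\mu)C(\mu)^T\eta^T$. Abbreviating $M\triangleq C(\mu)\Sigma(\mu)C(\mu)^T$, the measurement-update formula gives $C(\mu)\Sigma^*(\mu)C(\mu)^T=M-MM^{+}M=0$ by the pseudo-inverse identity $MM^{+}M=M$. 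Hence $(\Sigma^*(\mu))_{ii}=0$, and the coupling relation yields $(\Sigma(\mu))_{ii}=W_{ii}$.

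Part (d) is the main obstacle, since it asserts the upper bound in (a) is attained, i.e.\ the measurements carry no information about state $i$. I will argue by decoupling. After reordering so that $i$ is the last index, the hypotheses give block forms $A=\textrm{diag}(A_J,\lambda_i)$, $W=\textrm{diag}(W_J,W_{ii})$, and $C(\mu)=[\,C_J\ \ 0\,]$, the trailing zero being the vanishing $i$th column. I will verify by direct substitution that the block-diagonal matrix $\textrm{diag}\big(\Sigma_J,\frac{W_{ii}}{1-\lambda_i^2}\big)$ solves the full DARE \eqref{eqn:DARE}, where $\Sigma_J$ is the (unique, since $A_J$ is Schur-stable) stabilizing solution of the reduced DARE for $(A_J,C_J,W_J)$. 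The only nonroutine check is that the DARE correction term is block-diagonal with a zero $(i,i)$ entry, which holds because $C(\mu)$ annihilates the posited $i$th column $\frac{W_{ii}}{1-\lambda_i^2}\mathbf e_i^T$. By uniqueness of the stabilizing solution, this block-diagonal matrix equals $\Sigma(\mu)$, giving $(\Sigma(\mu))_{ii}=\frac{W_{ii}}{1-\lambda_i^2}$. The delicate points are the pseudo-inverse bookkeeping when $C_J\Sigma_J C_J^T$ is singular and the appeal to uniqueness, both of which are available precisely because $A$ is Schur-stable.
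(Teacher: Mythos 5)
Your proof is correct, and it takes a genuinely different route from the paper's on the two nontrivial parts. For the upper bound in (a), the paper asserts the monotonicity $(\Sigma(\mu))_{ii}\le(\Sigma(\mathbf{0}))_{ii}$ (``it is easy to see'') and computes $\Sigma(\mathbf{0})$ from the Lyapunov equation $\Sigma(\mathbf{0})=A\Sigma(\mathbf{0})A^T+W$; your argument instead notes that the measurement-update correction $(C\Sigma)^T(C\Sigma C^T)^{+}(C\Sigma)\succeq 0$ gives $\Sigma^*(\mu)\preceq\Sigma(\mu)$, which plugged into the scalar coupling relation yields $(1-\lambda_i^2)(\Sigma(\mu))_{ii}\le W_{ii}$ directly --- this is more self-contained, since it avoids the implicit Riccati comparison lemma behind the paper's ``easy to see.'' For (e), the paper reduces $C(\mu)$ by elementary row operations (justifying in a footnote that with $V=\mathbf{0}$ these do not change the filter) to a block form $\left[\begin{smallmatrix}1 & \mathbf{0}\\ \mathbf{0} & \tilde{C}_1(\mu)\end{smallmatrix}\right]$ and reads off a decoupled scalar equation; your computation $C\Sigma^*(\mu)C^T=M-MM^{+}M=\mathbf{0}$ with $M=C\Sigma(\mu)C^T$, followed by $(\Sigma^*(\mu))_{ii}=\eta\,C\Sigma^*(\mu)C^T\eta^T=0$, is a slick one-line replacement that needs neither the reordering nor the row-operation footnote. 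Parts (b) and (c) coincide with the paper's.

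The one step to tighten is (d). The paper simply observes that, with $A$ and $W$ diagonal and the $i$th column of $C(\mu)$ zero, the DARE (and indeed the Riccati iteration) preserves the block-diagonal structure, so the $(i,i)$ entry obeys the uncorrected scalar recursion and converges to $\frac{W_{ii}}{1-\lambda_i^2}$. You instead verify that $\textrm{diag}\bigl(\Sigma_J,\frac{W_{ii}}{1-\lambda_i^2}\bigr)$ is a fixed point of the full DARE --- which is fine --- but then conclude by appealing to ``uniqueness of the stabilizing solution.'' Two cautions: you never verify that your constructed fixed point is stabilizing (it is, since the closed-loop block decomposes as $(A_J-K_JC_J)\oplus\lambda_i$ with $|\lambda_i|<1$), and the standard uniqueness theorems for the DARE are stated for nonsingular innovation covariance, whereas here $V=\mathbf{0}$ and the equation involves a pseudo-inverse. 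A cleaner patch, available from Lemma \ref{lemma:Anderson optimal filtering}: since $A$ is Schur-stable, the Riccati iteration converges to $\Sigma(\mu)$ from \emph{any} positive semidefinite initial covariance; initializing it at your fixed point makes it constant, so the fixed point equals $\Sigma(\mu)$ with no uniqueness theory needed. With that substitution your argument for (d) is complete and essentially matches the paper's decoupling idea in verified form.
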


\begin{lemma}
Consider a discrete-time linear system as defined in Eq. $(\ref{eqn:system dynamics})$ and Eq. $(\ref{eqn:all sensors measurements})$. Suppose the system dynamics matrix is of the form $A=\textrm{diag}(\lambda_1,0,\dots,0)\in\mathbb{R}^{n\times n}$, where $0<|\lambda_1|<1$, the measurement matrix $C=[1\ \mathbf{\gamma}]$, where $\gamma\in\mathbb{R}^{1\times(n-1)}$, the system noise covariance matrix $W=I_{n\times n}$, and the sensor noise covariance matrix $V=\mathbf{0}$. Then, the MSEE of state $1$, i.e., $\Sigma_{11}$, satisfies 
\begin{equation}
\Sigma_{11}=\frac{1+\alpha^2\lambda_1^2-\alpha^2+\sqrt[]{(\alpha^2-\alpha^2\lambda_1^2-1)^2+4\alpha^2}}{2},
\label{eqn: mean square estimation error of state 1}
\end{equation}
where $\alpha^2\triangleq\lVert\gamma\rVert_2^2$. Moreover, if we view $\Sigma_{11}$ as a function of $\alpha^2$, denoted as $\Sigma_{11}(\alpha^2)$, then $\Sigma_{11}(\alpha^2)$ is a strictly increasing function of $\alpha^2\in\mathbb{R}_{\ge0}$ with $\Sigma_{11}(0)=1$ and ${\lim}_{\alpha\to\infty}\Sigma_{11}(\alpha^2)=\frac{1}{1-\lambda_1^2}$.
\label{lemma:estimation error of state 1}
\end{lemma}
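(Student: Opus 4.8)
The plan is to exploit the rank-one structure of $A=\textrm{diag}(\lambda_1,0,\dots,0)$ to show that the steady-state covariance $\Sigma$ is \emph{diagonal}, reduce the DARE to a single scalar quadratic in $\Sigma_{11}$, solve it, and then obtain the monotonicity and the limits by implicit differentiation together with the a priori bounds of Lemma \ref{Lemma:minimum trace of sigma}.

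First I would note that, since $A$ is stable, the pair $(A,C)$ is detectable, so by Lemma \ref{lemma:Anderson optimal filtering} the limit $\Sigma$ exists and satisfies the DARE \eqref{eqn:DARE}. Writing $A=\lambda_1\mathbf{e}_1^T\mathbf{e}_1$ and letting $E_{11}\triangleq\mathbf{e}_1^T\mathbf{e}_1$ denote the matrix whose only nonzero entry is a $1$ in position $(1,1)$, observe that for any matrix $M$ the product $AMA^T$ is supported only at position $(1,1)$, while $AM$ has only its first row nonzero. Consequently both nonconstant terms on the right-hand side of \eqref{eqn:DARE}, namely $A\Sigma A^T$ and $A\Sigma C^T(C\Sigma C^T)^{-1}C\Sigma A^T$, are scalar multiples of $E_{11}$. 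Hence $\Sigma_{ij}=W_{ij}=\delta_{ij}$ for every $(i,j)\neq(1,1)$, so $\Sigma=\textrm{diag}(\Sigma_{11},1,\dots,1)$. With this structure $C\Sigma C^T=\Sigma_{11}+\lVert\gamma\rVert_2^2=\Sigma_{11}+\alpha^2$, which is strictly positive because $\Sigma_{11}\ge W_{11}=1$ by part 1 of Lemma \ref{Lemma:minimum trace of sigma}; in particular no pseudo-inverse is actually needed despite $V=\mathbf{0}$.

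Next I would extract the $(1,1)$ entry of the DARE. Using $\Sigma=\textrm{diag}(\Sigma_{11},1,\dots,1)$ gives $A\Sigma A^T=\lambda_1^2\Sigma_{11}E_{11}$ and $(A\Sigma C^T)_1=\lambda_1\Sigma_{11}$, so the $(1,1)$ entry reads
\begin{equation*}
\Sigma_{11}=\lambda_1^2\Sigma_{11}+1-\frac{\lambda_1^2\Sigma_{11}^2}{\Sigma_{11}+\alpha^2}.
\end{equation*}
Clearing the denominator and simplifying collapses this to the quadratic
\begin{equation*}
\Sigma_{11}^2+(\alpha^2-\lambda_1^2\alpha^2-1)\Sigma_{11}-\alpha^2=0.
\end{equation*}
Since the product of its roots is $-\alpha^2\le 0$, there is a unique nonnegative root; taking it yields exactly Eq. \eqref{eqn: mean square estimation error of state 1}, and evaluating at $\alpha^2=0$ gives $\Sigma_{11}=1$, consistent with the lower bound of Lemma \ref{Lemma:minimum trace of sigma}.

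Finally, for the qualitative claims I would set $t=\alpha^2$ and $F(x,t)=x^2+(t(1-\lambda_1^2)-1)x-t$, and differentiate implicitly to get $\frac{dx}{dt}=-\frac{(1-\lambda_1^2)x-1}{2x+t(1-\lambda_1^2)-1}$. The denominator is the slope of the upward parabola $F(\cdot,t)$ at its larger root and is therefore positive (indeed $\ge 1$ since $x\ge 1$), while the numerator is negative because $x<\frac{1}{1-\lambda_1^2}$ by the upper bound in part 1 of Lemma \ref{Lemma:minimum trace of sigma} (which is strict for finite $\alpha^2$); hence $\frac{dx}{dt}>0$ and $\Sigma_{11}(\alpha^2)$ is strictly increasing. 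The limit $\lim_{\alpha\to\infty}\Sigma_{11}=\frac{1}{1-\lambda_1^2}$ then follows either by dividing the quadratic by $\alpha^2$ and letting $\alpha\to\infty$ (the bounded term $\Sigma_{11}^2/\alpha^2$ vanishes, leaving $(1-\lambda_1^2)\Sigma_{11}-1=0$) or directly from the sandwich bound. I expect the main obstacle to be the clean justification of the diagonal structure of $\Sigma$ and the attendant $V=\mathbf{0}$ subtlety in the first step; once $\Sigma$ is known to be diagonal, the reduction to the quadratic and the sign analysis of $\frac{dx}{dt}$ are routine.
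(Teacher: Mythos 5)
Your proof is correct, and it takes a genuinely different route from the paper's. The paper argues system-theoretically: since $A=\textrm{diag}(\lambda_1,0,\dots,0)$ and $W=I_{n\times n}$, the states $x_2,\dots,x_n$ are pure white noise, so the measurement can be rewritten as $y[k]=x_1[k]+v'[k]$ with $v'[k]=\sum_{i=1}^{n-1}\gamma_i w_{i+1}[k-1]$, i.e., a noisy scalar measurement of $x_1$ with effective noise variance $\alpha^2$; the problem then collapses to a scalar Kalman filter with $A=\lambda_1$, $C=1$, $W=1$, $V=\alpha^2$, whose scalar DARE is exactly your quadratic. You instead stay with the matrix DARE and exploit the rank-one structure $A=\lambda_1\mathbf{e}_1^T\mathbf{e}_1$ to show $\Sigma=\textrm{diag}(\Sigma_{11},1,\dots,1)$ and read off the $(1,1)$ entry; this purely algebraic reduction is equivalent, and your observation that the support argument goes through whether $(C\Sigma C^T)^{-1}$ is an inverse or a pseudo-inverse handles the $V=\mathbf{0}$ subtlety that the paper sidesteps by the noise-absorption trick. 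For monotonicity the paper leans on an external comparison result (Lemma 6 of \cite{zhang2017sensor}, monotonicity of the Riccati solution in the sensor information matrix), whereas your implicit-differentiation argument is self-contained, which is a real advantage; the limit computations are essentially the same. Two small patches are needed: (i) the strict inequality $\Sigma_{11}<\frac{1}{1-\lambda_1^2}$ does not follow from Lemma~\ref{Lemma:minimum trace of sigma}(a), whose upper bound is non-strict, but it follows directly from your own quadratic, since $F\bigl(\tfrac{1}{1-\lambda_1^2},t\bigr)=\tfrac{\lambda_1^2}{(1-\lambda_1^2)^2}>0$ while the smaller root of $F(\cdot,t)$ is nonpositive, so the larger root lies strictly below $\tfrac{1}{1-\lambda_1^2}$; (ii) at $\alpha^2=0$ the roots are $0$ and $1$, so ``unique nonnegative root'' fails in that edge case --- instead use $\Sigma_{11}\ge W_{11}=1$ (which you already invoke) to select the larger root uniformly for all $\alpha^2\ge 0$.
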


\subsection{NP-hardness of the KFSS problem}

To prove the KFSS problem (Problem $\ref{pro: the KFSS problem}$) is NP-hard, we relate it to the problem described below.

\begin{definition}
$(X3C)$ Given a finite set $D$ with $|D|=3m$ and a collection $\mathcal{C}$ of $3$-element subsets of $D$, an {\it exact cover} for $D$ is a subcollection $\mathcal{C}'\subseteq \mathcal{C}$ such that every element of $D$ occurs in exactly one member of $\mathcal{C}'$.
\end{definition}

\begin{remark}
Since each member in $\mathcal{C}$ is a subset of $D$ with exactly $3$ elements, if there exists an exact cover for $D$, then it must consist of exactly $m$ members of $\mathcal{C}$.
\end{remark}

We will use the following result \cite{garey1979computers}.

\begin{lemma}
Given a finite set $D$ with $|D|=3m$ and a collection $\mathcal{C}$ of $3$-element subsets of $D$, the problem to determine whether $\mathcal{C}$ contains an exact cover for $D$ is NP-complete.
\label{lemma:X3C}
\end{lemma}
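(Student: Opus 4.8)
The plan is to establish both membership in $NP$ and $NP$-hardness. Membership is immediate: given a candidate subcollection $\mathcal{C}' \subseteq \mathcal{C}$, one verifies in time polynomial in $|D|$ and $|\mathcal{C}|$ that $\mathcal{C}'$ covers every element of $D$ exactly once (by scanning the members of $\mathcal{C}'$ and tallying occurrences of each element). Thus $X3C \in NP$, and the entire burden falls on hardness.

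For $NP$-hardness I would reduce from the $3$-Dimensional Matching problem ($3DM$): given three disjoint sets $X, Y, Z$ with $|X|=|Y|=|Z|=m$ and a collection $T \subseteq X \times Y \times Z$, decide whether $T$ contains a perfect matching (a size-$m$ subset in which every element of $X \cup Y \cup Z$ appears exactly once). The key observation is that $X3C$ directly generalizes $3DM$: setting $D = X \cup Y \cup Z$ (so $|D| = 3m$) and $\mathcal{C} = \{\,\{x,y,z\} : (x,y,z) \in T\,\}$ turns each triple into a $3$-element subset of $D$. A subcollection $\mathcal{C}'$ is an exact cover of $D$ if and only if the corresponding triples form a perfect matching in $T$, since covering each element exactly once is precisely the matching condition. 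This map is computable in linear time, so $3DM \le_p X3C$, which yields $NP$-hardness provided $3DM$ is itself $NP$-complete.

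The remaining (and genuinely difficult) step is to justify the $NP$-hardness of $3DM$, which I would obtain by the classical gadget reduction from $3SAT$. For each Boolean variable $u_i$ appearing in $n_i$ clauses, one builds a \emph{truth-setting} component — a ring of internal elements linked by triples arranged so that any exact cover must select either all the ``true'' triples or all the ``false'' triples, thereby encoding a consistent assignment to $u_i$. For each clause $C_j$, one adds a \emph{satisfaction-testing} component containing dedicated elements together with triples that can be completed only if at least one literal of $C_j$ was left uncovered by its truth-setting component (i.e., the clause is satisfied). Finally, \emph{garbage-collection} triples absorb the internal elements that remain uncovered when a variable occurs more often than needed. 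The correctness argument — showing a perfect matching exists if and only if the formula is satisfiable — rests on carefully counting which elements each gadget forces to be covered and verifying that no triple is used ambiguously across components.

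I expect the main obstacle to be precisely this gadget construction and its correctness proof: the interaction among the truth-setting rings, clause gadgets, and garbage collection must be balanced so that exactly the satisfying assignments survive, and the associated counting arguments are delicate. An alternative, lighter-weight route — entirely adequate for the purposes of this paper — is simply to invoke the result as established in \cite{garey1979computers}, where $X3C$ is catalogued among the core $NP$-complete problems, thereby sidestepping the gadget analysis altogether.
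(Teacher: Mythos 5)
Your proposal is correct in outline, but note that the paper itself offers no proof of this lemma at all: it simply invokes the classical result from the Garey--Johnson catalogue \cite{garey1979computers}, so the ``lighter-weight route'' you mention at the end is exactly what the paper does. Your more ambitious route is the standard textbook chain: membership of $X3C$ in $NP$ by direct verification, $3DM \le_p X3C$ by forgetting the tripartite structure, and $NP$-hardness of $3DM$ via the gadget reduction from $3SAT$. The first two steps are complete and correct as you state them --- in particular, since $X$, $Y$, $Z$ are disjoint, each triple $(x,y,z) \in T$ corresponds to a distinct $3$-element subset of $D = X \cup Y \cup Z$, and exact covers of $D$ are precisely perfect matchings in $T$, so the reduction is sound (one should also note it preserves the $|D| = 3m$ format required by the lemma, which it does). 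The third step, however, is only named rather than proved: the truth-setting rings, clause-satisfaction components, and garbage-collection triples are described at the level of intent, and the counting argument you yourself flag as delicate is left undone, so as a self-contained proof your proposal is incomplete at exactly the point you predicted. At the level of rigor the paper demands, the citation suffices; what your sketch buys beyond the paper's treatment is a correct map of where the genuine content of the classical result lies, which the paper's bare citation conceals.
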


We are now in place to prove the following result.

\begin{theorem}
The KFSS problem is NP-hard when the system dynamics matrix $A$ is stable and each sensor $i\in\mathcal{Q}$ has identical cost.
\label{thm: KFSS is NP-hard}
\end{theorem}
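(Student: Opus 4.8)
The plan is to reduce the X3C problem (Lemma \ref{lemma:X3C}) to the KFSS problem. Given an X3C instance with set $D$, $|D|=3m$, and collection $\mathcal{C}=\{S_1,\dots,S_q\}$ of $3$-element subsets, I would build a system with $n=3m+1$ states: a single \emph{target} state with dynamics $\lambda_1=\lambda$ for a fixed rational $\lambda\in(0,1)$, and $3m$ \emph{element} states indexed by $D$, each with $\lambda_i=0$. Thus $A=\textrm{diag}(\lambda,0,\dots,0)$, $W=I_{n\times n}$, and $V=\mathbf{0}$. I would create one unit-cost sensor per set $S_j$ whose single measurement row is $C_j=\mathbf{e}_1+\sum_{d\in S_j}\mathbf{e}_{d}-\tfrac{1}{m}\sum_{d\in D}\mathbf{e}_{d}$, where $\mathbf{e}_d$ denotes the unit vector of the element-state coordinate indexed by $d$. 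Finally I set the budget $B=m$ and ask whether a selection with $\textrm{trace}(\Sigma(\mu))\le 1+3m$ exists. Since $A$ is stable, every selection is detectable and yields a finite $\Sigma(\mu)$, so the instance is exactly of the stable, identical-cost type claimed.

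Next I would reduce the objective to the target state alone. By Lemma \ref{Lemma:minimum trace of sigma}(3) each element state contributes exactly $W_{ii}=1$ to the trace regardless of $\mu$, so $\textrm{trace}(\Sigma(\mu))=\Sigma_{11}(\mu)+3m$ and minimizing the trace is equivalent to minimizing the target error $\Sigma_{11}$. For this diagonal structure one checks that $\Sigma_{11}(\mu)$ depends on the chosen rows only through the minimal residual contamination $\nu^2(\mu)\triangleq\min\{\lVert r_{2:n}\rVert_2^2 : r\in\textrm{rowspace}(C(\mu)),\ r_1=1\}$, i.e.\ the smallest element-space norm of a combined measurement whose target coefficient is $1$; the resulting fixed-point equation for $\Sigma_{11}$ is precisely the one in Lemma \ref{lemma:estimation error of state 1} with $\alpha^2=\nu^2(\mu)$. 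Hence $\Sigma_{11}(\mu)=1$ iff $\nu^2(\mu)=0$ iff $\mathbf{e}_1\in\textrm{rowspace}(C(\mu))$ (matching Lemma \ref{Lemma:minimum trace of sigma}(5)), and $\Sigma_{11}(\mu)>1$ strictly otherwise by the strict monotonicity in Lemma \ref{lemma:estimation error of state 1}.

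It then remains to show that, within the budget, $\mathbf{e}_1\in\textrm{rowspace}(C(\mu))$ holds iff $\textrm{supp}(\mu)$ is an exact cover. If $\{S_j:j\in\textrm{supp}(\mu)\}$ is an exact cover, then taking coefficient $1/m$ on each selected row makes the element part telescope to zero (each element is covered once, cancelling the $-\tfrac1m\sum_{d\in D}\mathbf{e}_d$ shift), producing exactly $\mathbf{e}_1$. Conversely, any budget-feasible selection that is not an exact cover uses at most $m$ triples and therefore misses some element $d_0\in D$; the $d_0$-coordinate of every selected row then equals $-\tfrac1m$, so every combined row with target coefficient $1$ has $d_0$-entry $-\tfrac1m$, giving $\nu^2(\mu)\ge 1/m^2>0$ and hence $\Sigma_{11}(\mu)\ge\Sigma_{11}(1/m^2)>1$. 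Combining, a selection attains $\textrm{trace}(\Sigma(\mu))=1+3m$ iff it is an exact cover (using exactly $m$ sensors), and every other feasible selection is strictly larger; thus the constructed instance is a KFSS YES-instance iff the X3C instance admits an exact cover. The construction is polynomial in $m$ and $q$, completing the reduction.

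I expect the main obstacle to be the two steps linking the combinatorics to the filtering cost. First, one must rigorously justify that the multi-sensor target error collapses to the single scalar $\nu^2(\mu)$ so that Lemma \ref{lemma:estimation error of state 1} applies; this uses that all non-target states are static with equal prior variance, so the optimal target estimator is a single linear functional and knowledge of the remaining functionals does not reduce $\Sigma_{11}$. Second, one needs the combinatorial fact that every budget-feasible non-cover misses an element, which is exactly what forces the strict gap $\nu^2(\mu)\ge 1/m^2$ (bounded away from $0$ uniformly in the selection). The shift $-\tfrac1m\sum_{d\in D}\mathbf{e}_d$ in the measurement rows is the crucial device that makes an exact cover the \emph{unique} way to cancel all element contamination; without it, clean recovery of $\mathbf{e}_1$ would detect linear dependence among the selected set-indicators rather than an exact cover, which would not encode X3C.
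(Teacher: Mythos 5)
Your proposal is correct, and at its core it follows the same strategy as the paper's proof: a reduction from $X3C$ with $A=\textrm{diag}(\lambda,0,\dots,0)$, $W=I$, $V=\mathbf{0}$, the trace collapsing to $3m+\Sigma_{11}$ via Lemma~\ref{Lemma:minimum trace of sigma}(c), and the dichotomy $\Sigma_{11}=1$ iff $\mathbf{e}_1\in\textrm{rowspace}(C(\mu))$, with a strict, monotone penalty otherwise via Lemma~\ref{lemma:estimation error of state 1}. The differences lie in the gadget and the linear-algebra bookkeeping. The paper uses a dedicated first sensor with row $[1\ d^T]$ plus rows $[\mathbf{0}\ g_i^T]$ and budget $m+1$; you fold the target coordinate into every row and add the shift $-\tfrac{1}{m}\sum_{d\in D}\mathbf{e}_d$, with budget $m$. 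Your invariant $\nu^2(\mu)$ (the minimal element-space energy of a combined row with unit target coefficient) is exactly the quantity the paper extracts through Lemma~\ref{lemma:X3C no solution}: there, $\lVert\gamma\rVert_2^2$ is the squared distance from the target row's element part to the row space of the remaining rows, i.e., your $\nu^2(\mu)$. The step you correctly flag as the main obstacle --- that $\Sigma_{11}$ depends on the selection only through $\nu^2(\mu)$ and satisfies the scalar DARE of Lemma~\ref{lemma:estimation error of state 1} with $\alpha^2=\nu^2(\mu)$ --- is formalized in the paper by a block-orthogonal similarity $T=\textrm{diag}(1,N)$ (which fixes $A$ and $W$) together with elementary row operations (harmless since $V=\mathbf{0}$), after which the DARE decouples block-diagonally. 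In your gadget every selected row touches state $1$, so you would need one extra preliminary row reduction to zero out the target coefficient in all rows but one before applying that machinery; this is routine but should be stated.

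Two further remarks. First, the paper's gadget produces a gap that is uniformly bounded below: $\alpha^2\ge\omega\ge1$, so $\Sigma_{11}\ge\Sigma_{11}(1)$, a fixed constant once $\lambda_1$ is fixed. Your gap $\nu^2(\mu)\ge 1/m^2$ vanishes as $m$ grows --- still a perfectly valid exact separation for NP-hardness, since trace equals $3m+1$ iff an exact cover is selected and is strictly larger otherwise, but the uniform gap is precisely what the paper later amplifies (via the scaling $\varepsilon$ in Theorem~\ref{thm:inapprox}) to obtain inapproximability, so your variant would be less convenient to reuse there. Second, your closing claim that the shift $-\tfrac{1}{m}\sum_{d\in D}\mathbf{e}_d$ is crucial --- that without it, recovery of $\mathbf{e}_1$ would detect mere linear dependence rather than an exact cover --- is not right. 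In the paper's shift-free gadget, any real solution of $G(\mu)y=d$ using at most $m$ columns forces every row of $G(\mu)$ to be nonzero (an uncovered element yields a zero row of $G(\mu)$, making the corresponding coordinate of $d$ unreachable), and $l\le m$ triples covering all $3m$ elements are necessarily $m$ pairwise disjoint triples, i.e., an exact cover. This is the same counting fact your own argument rests on, so real solvability within the budget already encodes $X3C$, and the shift is an unnecessary (though harmless) complication in your construction.
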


\begin{proof}
We give a reduction from $X3C$ to KFSS. Consider an instance of $X3C$ to be a finite set $D$ with $|D|=3m$, and a collection $\mathcal{C}=\{c_1,\dots,c_{\tau}\}$ of $\tau$ $3$-element subsets of $D$, where $\tau\ge m$. For each element $c_i \in \mathcal{C}$, define the column vector $g_i \in \mathbb{R}^{3m}$ to encode which elements of $D$ are contained in $c_i$. In other words, for $i \in \{1, 2, \ldots, \tau\}$ and $j \in \{1, 2, \ldots, 3m\}$, $(g_i)_j = 1$ if element $j$ of set $D$ is in $c_i$, and $(g_i)_j = 0$ otherwise. Define the matrix $G=\left[\begin{matrix}g_1 & \cdots & g_{\tau}\end{matrix}\right]$. Furthermore, define $d=[1\ \cdots\ 1]^T\in\mathbb{R}^{3m}$. Thus $Gx=d$ has a solution $x\in\{0,1\}^{\tau}$ such that $x$ has $m$ nonzero entries if and only if the answer to the instance of $X3C$ is ``yes'' \cite{natarajan1995sparse}. 

Given the above instance of $X3C$, we then construct an instance of KFSS as follows. We define the system dynamics matrix as $A=\textrm{diag}(\lambda_1,0,\dots,0) \in\mathbb{R}^{(3m+1)\times(3m+1)}$, where $0<|\lambda_1|<1$.\footnote{We take $\lambda_1=\frac{1}{2}$ for the proof.} The set $\mathcal{Q}$ is defined to contain $\tau+1$ sensors with the collective measurement matrix 
\begin{equation}
C=\begin{bmatrix}
1 & d^T\\
\mathbf{0} & G^T
\end{bmatrix},
\label{eqn: C matrix in theorem 1}
\end{equation}
where $G$ and $d$ are defined, based on the given instance of $X3C$, as above.  The system noise covariance matrix is set to be $W=I_{(3m+1)\times(3m+1)}$, and the measurement noise covariance matrix is set to be $V=\mathbf{0}_{(\tau+1)\times (\tau+1)}$.  Finally, the cost vector is set as $b=[1\ \cdots\ 1]^T\in\mathbb{R}^{\tau+1}$, and the sensor selection budget is set as $B=m+1$.  Note that the sensor selection vector for this instance is denoted by $\mu\in\mathbb{R}^{\tau+1}$. For the above construction, since the only nonzero eigenvalue of $A$ is $\lambda_1$, we know from Lemma $\ref{Lemma:minimum trace of sigma}$(c) that $\sum_{i=2}^{3m+1}(\Sigma(\mu))_{ii}=\sum_{i=2}^{3m+1}W_{ii}=3m$ for all sensor selections $\mu$. 

We claim that the solution $\mu^*$ to the constructed instance of the KFSS problem satisfies $\textrm{trace}(\Sigma(\mu^*))=\textrm{trace}(W)=3m+1$ if and only if the answer to the given instance of the $X3C$ problem is ``yes''.

Suppose that the answer to the instance of the $X3C$ problem is ``yes''. Then $Gx=d$ has a solution such that $x$ has $m$ nonzero entries. Denote the solution as $x^*$ and denote $\textrm{supp}(x^*)=\{i_1,\dots,i_m\}$. Define $\tilde{\mu}$ as the sensor selection vector that indicates selecting the first and the $(i_1+1)$th to the $(i_m+1)$th sensors, i.e., sensors that correspond to rows $C_1$, $C_{i_1+1},\dots, C_{i_m+1}$ from \eqref{eqn: C matrix in theorem 1}. Since $Gx^*=d$, we have $[1\ -x^{*T}]C=\mathbf{e}_1$ for $C$ as defined in Eq. $(\ref{eqn: C matrix in theorem 1})$. Noting that $\textrm{supp}(x^*)=\{i_1,\dots,i_m\}$, it then follows that $\mathbf{e}_1\in\textrm{rowspace}(C(\tilde{\mu}))$. Hence, we know from Lemma \ref{Lemma:minimum trace of sigma}(a) and Lemma \ref{Lemma:minimum trace of sigma}(e) that $(\Sigma(\tilde{\mu}))_{11}=1$, which is also the minimum value of $(\Sigma(\mu))_{11}$ among all possible sensor selections $\mu$. Since $\sum_{i=2}^{3m+1}(\Sigma(\mu))_{ii}=3m$ always holds as argued above, we have $\textrm{trace}(\Sigma(\tilde{\mu}))=\textrm{trace}(W)=3m+1$ and $\tilde{\mu}$ is the optimal sensor selection, i.e., $\tilde{\mu}=\mu^*$. 

Conversely, suppose that the answer to the $X3C$ problem is ``no''. Then, for any union of $l\le m$ ($l\in\mathbb{Z}$) subsets in $\mathcal{C}$, denoted as $\mathcal{C}_l$, there exist $\omega\ge1$ $(\omega\in\mathbb{Z})$ elements in $D$ that are not covered by $\mathcal{C}_l$, i.e., for any $l\le m$ and $\mathcal{L}\triangleq\{i_1,\dots,i_l\}\subseteq\{1,\dots,\tau\}$, $G_{\mathcal{L}}\triangleq\left[\begin{matrix} g_{i_1}\ \cdots\ g_{i_l}\end{matrix}\right]$ has $\omega$ zero rows, for some $\omega\ge1$. We then show that $\textrm{trace}(\Sigma(\mu))> 3m+1$ for all sensor selections $\mu$ that satisfy the budget constraint. First, for any possible sensor selection $\mu$ that does not select the first sensor, we have the first column of $C(\mu)$ is zero (from the form of $C$ as defined in Eq. $(\ref{eqn: C matrix in theorem 1})$) and we know from Lemma $\ref{Lemma:minimum trace of sigma}$(d) that  $(\Sigma(\mu))_{11}=\frac{1}{1-\lambda_1^2}=\frac{4}{3}$, which implies that $\textrm{trace}(\Sigma(\mu))=3m+\frac{4}{3}>3m+1$.  Thus, consider sensor selections $\mu$ that select the first sensor, denote $\textrm{supp}(\mu)=\{1,i_1,\dots,i_l\}$, where $l\le m$ and define $G(\mu)=\left[\begin{matrix}g_{i_1-1} & \cdots & g_{i_l-1}\end{matrix}\right]$. We then have
\begin{equation}
C(\mu)=\begin{bmatrix}
1 & d^T\\
\mathbf{0} & G(\mu)^T
\end{bmatrix},
\label{eqn:C_mu_thm_1}
\end{equation}
where $G(\mu)^T$ has $\omega$ zero columns, for some $\omega\ge1$. As argued in Lemma $\ref{lemma:X3C no solution}$ in the appendix, there exists an orthogonal matrix $T\in\mathbb{R}^{(3m+1)\times (3m+1)}$ of the form $T = \left[\begin{smallmatrix}1 & 0 \\ 0 & N\end{smallmatrix}\right]$ (where $N$ is also an orthogonal matrix) such that 
\begin{equation*}
\tilde{C}(\mu)\triangleq C(\mu)T=\begin{bmatrix}
1 & \mathbf{\gamma} &\mathbf{\beta}\\
\mathbf{0} & \mathbf{0} & \tilde{G}(\mu)^T
\end{bmatrix}.
\end{equation*}
In the above expression, $\tilde{G}(\mu)^T\in\mathbb{R}^{l\times r}$ is of full column rank, where $r=\textrm{rank}(G(\mu)^T)$. Furthermore, $\gamma\in\mathbb{R}^{1\times(3m-r)}$ and $\omega$ of its elements are $1$'s, and $\beta\in\mathbb{R}^{1\times r}$. We perform a similarity transformation on the system with the matrix $T$ (which does not affect the trace of the error covariance matrix in general and does not change $A$ and $W$ in this case), and perform additional elementary row operations to transform $\tilde{C}(\mu)$ into the matrix
\begin{equation}
\tilde{C}'(\mu)=\begin{bmatrix}
1 & \mathbf{\gamma} &\mathbf{0}\\
\mathbf{0} & \mathbf{0} & \tilde{G}(\mu)^T
\end{bmatrix}.
\label{eqn:C matrix after row and column operations}
\end{equation}
Since $A$ and $W$ are both diagonal, and $V=\mathbf{0}$, we can obtain from Eq. $(\ref{eqn:DARE})$ that the steady state error covariance $\tilde{\Sigma}'(\mu)$ corresponding to the sensing matrix $\tilde{C}'(\mu)$ is of the form
\begin{equation*}
\tilde{\Sigma}'(\mu)=\begin{bmatrix}
\tilde{\Sigma}'_1(\mu) & \mathbf{0}\\
\mathbf{0} & \tilde{\Sigma}'_2(\mu)
\end{bmatrix},
\end{equation*}
where $\tilde{\Sigma}'_1(\mu)\in\mathbb{R}^{(3m+1-r)\times (3m+1-r)}$, denoted as $\Sigma$ for simplicity, satisfies
\begin{equation*}
\Sigma=A_1\Sigma A_1^T+W_1-A_1\Sigma C_1^T\big(C_1\Sigma C_1^T\big)^{-1}C_1\Sigma A_1^T,
\end{equation*}
where $A_1=\textrm{diag}(\lambda_1,0,\dots,0)\in\mathbb{R}^{(3m+1-r)\times(3m+1-r)}$, $C_1=[1\ \gamma]$ and $W_1=I_{(3m+1-r)\times(3m+1-r)}$. We then know from Lemma $\ref{lemma:estimation error of state 1}$ that
$(\Sigma(\mu))_{11}=(\tilde{\Sigma}'(\mu))_{11}>1$ since $\lVert \gamma \rVert_2^2\ge\omega\ge1>0$. Hence, we have $\textrm{trace}(\Sigma(\mu))> 3m+1$.

This completes the proof of the claim above. Suppose that there is an algorithm $\mathcal{A}$ that outputs the optimal solution $\mu^*$ to the instance of the KFSS problem defined above. We can call algorithm $\mathcal{A}$ to solve the $X3C$ problem. Specifically, if the algorithm $\mathcal{A}$ outputs a solution $\mu^*$ such that $\textrm{trace}(\Sigma(\mu^*))=\textrm{trace}(W)$, then the answer to the instance of $X3C$ is ``yes''; otherwise, the answer is ``no''.

Hence, we have a reduction from $X3C$ to the KFSS problem. Since $X3C$ is NP-complete and KFSS $\notin NP$, we conclude that the KFSS problem is NP-hard.
\end{proof}

\subsection{Inapproximability of the KFSS Problem}

In this section, we analyze the achievable performance of algorithms for the KFSS problem.  Specifically, consider any given instance of the KFSS problem.  For any given algorithm $\mathcal{A}$, we define the following ratio:
\begin{equation}
r_{\mathcal{A}}(\Sigma)\triangleq \frac{\textrm{trace}(\Sigma_{\mathcal{A}})}{\textrm{trace}(\Sigma_{opt})},
\label{eqn:performance ratio of algorithm}
\end{equation}
where $\Sigma_{opt}$ is the optimal solution to the KFSS problem and $\Sigma_{\mathcal{A}}$ is the solution to the KFSS problem given by algorithm $\mathcal{A}$.

In \cite{zhang2017sensor}, the authors showed that there is an upper bound for $r_{\mathcal{A}}(\Sigma)$ for any sensor selection algorithm $\mathcal{A}$, in terms of the system matrices.  However, the question of whether it is possible to find an algorithm $\mathcal{A}$ that is guaranteed to provide an approximation ratio $r_{\mathcal{A}}(\Sigma)$ that is {\it independent} of the system parameters has remained open up to this point.  In particular, it is typically desirable to find {\it constant-factor} approximation algorithms, where the ratio $r_{\mathcal{A}}(\Sigma)$ is upper-bounded by some (system-independent) constant.  Here, we provide a strong negative result and show that for the KFSS problem, there is no constant-factor approximation algorithm in general, i.e., for all polynomial-time algorithms $\mathcal{A}$ and $\forall K\in\mathbb{R}_{\ge 1}$, there are instances of the KFSS problem where $r_{\mathcal{A}}(\Sigma)>K$.

\begin{theorem}
If $P\neq NP$, then there is no polynomial-time constant-factor approximation algorithm for the KFSS problem.
\label{thm:inapprox}
\end{theorem}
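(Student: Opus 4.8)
\section*{Proof proposal for Theorem~\ref{thm:inapprox}}

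The plan is to prove inapproximability by upgrading the NP-hardness reduction of Theorem~\ref{thm: KFSS is NP-hard} into a \emph{gap-producing} reduction from $X3C$: for every fixed constant $K\ge 1$ I would construct, in polynomial time, a KFSS instance whose optimal value is exactly $3m+1$ when the $X3C$ instance is a ``yes'' instance, and strictly larger than $K(3m+1)$ when it is a ``no'' instance. A $K$-factor approximation algorithm $\mathcal{A}$ would then, on a ``yes'' instance, output a selection of trace at most $K(3m+1)$, while on a ``no'' instance \emph{every} feasible selection (in particular $\mathcal{A}$'s output, whose trace is at least the optimum) has trace exceeding $K(3m+1)$. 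Comparing the trace of $\mathcal{A}$'s output to the threshold $K(3m+1)$ would thus decide $X3C$ in polynomial time, contradicting Lemma~\ref{lemma:X3C} under $P\neq NP$; as $K$ is arbitrary, this rules out every constant factor.

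I would reuse the construction in the proof of Theorem~\ref{thm: KFSS is NP-hard} with two parameters left free: the single nonzero eigenvalue $\lambda_1$ (no longer fixed at $1/2$), and a scaling factor $\rho\ge 1$ applied to the off-diagonal block $d^T$ of the first sensor row, so that $C=\left[\begin{smallmatrix}1 & \rho d^T\\ \mathbf{0} & G^T\end{smallmatrix}\right]$, keeping $A=\textrm{diag}(\lambda_1,0,\dots,0)$, $W=I$, $V=\mathbf 0$, unit costs, and budget $B=m+1$. Because states $2,\dots,3m+1$ have zero dynamics, Lemma~\ref{Lemma:minimum trace of sigma}(c) still fixes their contribution to the trace at exactly $3m$ for every selection, so the whole argument reduces to controlling $(\Sigma(\mu))_{11}$. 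In the ``yes'' case, if $\{c_{i_1},\dots,c_{i_m}\}$ is an exact cover then selecting the first sensor together with the corresponding $m$ sensors and forming $[1\ \rho d^T]-\rho\sum_j[\,\mathbf 0\ g_{i_j}^T]=\mathbf{e}_1$ shows $\mathbf{e}_1\in\textrm{rowspace}(C(\tilde\mu))$, so Lemma~\ref{Lemma:minimum trace of sigma}(e) gives $(\Sigma)_{11}=1$ and optimal trace $3m+1$, independent of $\lambda_1$ and $\rho$. In the ``no'' case, any feasible $\mu$ either omits the first sensor, whence Lemma~\ref{Lemma:minimum trace of sigma}(d) gives $(\Sigma(\mu))_{11}=\tfrac{1}{1-\lambda_1^2}$, or includes it, in which case the orthogonal reduction of Theorem~\ref{thm: KFSS is NP-hard} applies and the $\omega\ge1$ uncovered coordinates now contribute entries equal to $\rho$ to $\gamma$, so that $\alpha^2=\lVert\gamma\rVert_2^2\ge\omega\rho^2\ge\rho^2$ and, by the monotonicity in Lemma~\ref{lemma:estimation error of state 1}, $(\Sigma(\mu))_{11}=\Sigma_{11}(\alpha^2)\ge\Sigma_{11}(\rho^2)$.

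The crux, and the step I expect to be the main obstacle, is choosing $(\lambda_1,\rho)$ with polynomially many bits so that \emph{both} lower bounds exceed the target. Set $T\triangleq K(3m+1)-3m$. Since $\Sigma_{11}(\alpha^2)$ increases to the ceiling $\tfrac{1}{1-\lambda_1^2}$ as $\alpha^2\to\infty$ (Lemma~\ref{lemma:estimation error of state 1}), I must first push this ceiling above $T$ by taking $\lambda_1$ close to $1$ (e.g.\ a rational with $1-\lambda_1^2\le\tfrac{1}{4T}$), which forces $\lambda_1$ to depend on $m$; this is precisely why no \emph{fixed} $\lambda_1$ works, and why the choice $\lambda_1=\tfrac12$ of Theorem~\ref{thm: KFSS is NP-hard} yields only a vanishing gap (there the ``no'' value of $(\Sigma)_{11}$ is bounded by the constant $\tfrac{4}{3}$, which the background $3m$ swamps). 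I then have to verify that a \emph{polynomially large} $\rho$ already brings $\Sigma_{11}(\rho^2)$ above $T$; a direct lower bound on the square-root term in Lemma~\ref{lemma:estimation error of state 1} (dropping the nonnegative $(\alpha^2(1-\lambda_1^2)-1)^2$) shows $\rho=\Theta(T)$ suffices, giving $\min\{\tfrac{1}{1-\lambda_1^2},\Sigma_{11}(\rho^2)\}>T$. With both branches exceeding $T$, the ``no'' optimum is at least $3m+T=K(3m+1)$ plus a strictly positive margin. Finally, since $T=O(m)$ for fixed $K$, both $\lambda_1$ and $\rho$ have polynomial bit-length and the reduction runs in polynomial time; evaluating the closed-form trace of $\mathcal{A}$'s output and comparing it to $K(3m+1)$ then decides $X3C$, yielding the desired contradiction.
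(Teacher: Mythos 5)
Your proposal is correct and takes essentially the same route as the paper's own proof: the paper also turns the Theorem~\ref{thm: KFSS is NP-hard} reduction into a gap reduction by letting $\lambda_1$ depend on $K$ and $m$ (it takes $\lambda_1=\frac{K(3m+1)-3m-1/2}{K(3m+1)-3m}$) and amplifying the first row to $[1\ \varepsilon d^T]$ with $\varepsilon=2\bigl(K(3m+1)-3m\bigr)\bigl\lceil\sqrt{K(3m+1)-3m-1}\bigr\rceil+1$, then argues exactly your two branches in the ``no'' case via Lemma~\ref{Lemma:minimum trace of sigma}(d) and the monotonicity in Lemma~\ref{lemma:estimation error of state 1} to force $(\Sigma(\mu))_{11}>K(3m+1)-3m$. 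Your choices ($1-\lambda_1^2\le\tfrac{1}{4T}$ and $\rho=\Theta(T)$ with $T=K(3m+1)-3m$, justified by dropping the nonnegative squared term under the square root) differ from the paper's only in the explicit constants and are equally valid.
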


\begin{proof}
Suppose that there exists such a (polynomial-time) approximation algorithm $\mathcal{A}$, i.e., $\exists K\in\mathbb{R}_{\ge 1}$ such that $r_{\mathcal{A}}(\Sigma)\le K$ for all instances of the KFSS problem, where $r_{\mathcal{A}}(\Sigma)$ is as defined in Eq. $\eqref{eqn:performance ratio of algorithm}$. We will show that $\mathcal{A}$ can be used to solve the $X3C$ problem as described in Lemma \ref{lemma:X3C}. Given an arbitrary instance of the $X3C$ problem (with a base set $D$ containing $3m$ elements and a collection $\mathcal{C}$ of $3$-element subsets of $D$), we  construct a corresponding instance of the KFSS problem in a similar way to that described in the proof of Theorem \ref{thm: KFSS is NP-hard}. Specifically, the system dynamics matrix is set as $A=\textrm{diag}(\lambda_1,0,\dots,0)\in\mathbb{R}^{(3m+1)\times(3m+1)}$, where $\lambda_1=\frac{K(3m+1)-3m-1/2}{K(3m+1)-3m}$.  The set $\mathcal{Q}$ contains $\tau+1$ sensors with collective measurement matrix
\begin{equation}
C=\begin{bmatrix}
1 & \varepsilon d^T\\
\mathbf{0} & G^T
\end{bmatrix},
\label{eqn: C matrix inapprox}
\end{equation}
where $G$, $d$ depend on the given instance of $X3C$ and are as defined in the proof of Theorem $\ref{thm: KFSS is NP-hard}$.  The constant $\varepsilon$ is chosen as $\varepsilon=2[(K(3m+1)-3m)]\Bigl\lceil\sqrt{K(3m+1)-3m-1}\Bigr\rceil+1$. The system noise covariance matrix is set to $W=I_{(3m+1)\times(3m+1)}$, and the measurement noise covariance matrix is set to be $V=\mathbf{0}_{(\tau+1)\times (\tau+1)}$. The sensor cost vector is set as $b=[1\ \cdots\ 1]^T\in\mathbb{R}^{\tau+1}$, and the sensor selection budget is set as $B=m+1$.  Note that the sensor selection vector is given by $\mu\in\mathbb{R}^{\tau+1}$. 

We claim that there exists a sensor selection vector $\mu$ such that $\textrm{trace}(\Sigma(\mu))\le K(3m+1)$ if and only if the answer to the $X3C$ problem is ``yes''. 

Suppose that the answer to the $X3C$ problem is ``yes''. We know from Theorem $\ref{thm: KFSS is NP-hard}$ that there exists a sensor selection $\mu$ such that $\textrm{trace}(\Sigma(\mu))=3m+1 \le K(3m+1)$.

Conversely, suppose that the answer to the $X3C$ problem is ``no''. Then, for any union of $l\le m$ ($l\in\mathbb{Z}$) subsets in $\mathcal{C}$, denoted as $\mathcal{C}_l$, there exist $\omega\ge1$ $(\omega\in\mathbb{Z})$ elements in $D$ that are not covered by $\mathcal{C}_l$. We follow the discussion in Theorem $\ref{thm: KFSS is NP-hard}$.  First, for any sensor selection $\mu$ that does not select the first sensor, we have $(\Sigma(\mu))_{11}=\frac{1}{1-\lambda_1^2}$. Hence, by our choice of $\lambda_1$, we have $(\Sigma(\mu))_{11}>K(3m+1)-3m$, which implies $\textrm{trace}(\Sigma(\mu))>K(3m+1)$ since $\displaystyle\sum_{i = 2}^{3m+1}(\Sigma(\mu))_{ii} = 3m$ for all possible sensor selections. Thus, consider sensor selections $\mu$ that include the first sensor. As argued in the proof of Theorem~\ref{thm: KFSS is NP-hard} leading up to Eq. $(\ref{eqn:C matrix after row and column operations})$, we can perform an orthogonal similarity transformation on the system, along with elementary row operations on the measurement matrix $C(\mu)$ to obtain a measurement matrix of the form 
\begin{equation}
\tilde{C}'(\mu)=\begin{bmatrix}
1 & \varepsilon\mathbf{\gamma} &\mathbf{0}\\
\mathbf{0} & \mathbf{0} & \tilde{G}(\mu)^T
\end{bmatrix},
\label{eqn: C matrix inapprox after row operations}
\end{equation}
where $\omega\ge1$ elements of $\gamma\in\mathbb{R}^{3m-r}$ are $1$'s and $r=\textrm{rank}(\tilde{G}(\mu)^T)$. Then, we have $\alpha^2\triangleq\varepsilon^2\lVert\gamma\rVert_2^2\ge\omega\varepsilon^2\ge\varepsilon^2$. Moreover, we obtain from Lemma $\ref{lemma:estimation error of state 1}$
\begin{equation*}
(\Sigma(\mu))_{11}=\frac{1+\alpha^2\lambda_1^2-\alpha^2+\sqrt[]{(\alpha^2-\alpha^2\lambda_1^2-1)^2+4\alpha^2}}{2}.
\end{equation*}
If we view $(\Sigma(\mu))_{11}$ as a function of $\alpha^2$, denoted as $\Sigma_{11}(\alpha^2)$, we know from  Lemma $\ref{lemma:estimation error of state 1}$ that $\Sigma_{11}(\alpha^2)$ is an increasing function of $\alpha^2$. Hence, we have $\Sigma_{11}(\alpha^2)\ge\Sigma_{11}(\varepsilon^2)$, i.e.,
\begin{equation*}
\Sigma_{11}(\alpha^2)\ge\frac{1+\varepsilon^2\lambda_1^2-\varepsilon^2+\sqrt[]{(\varepsilon^2-\varepsilon^2\lambda_1^2-1)^2+4\varepsilon^2}}{2}.
\end{equation*}
By our choices of $\lambda_1$ and $\varepsilon$, we have $(\Sigma(\mu))_{11}>K(3m+1)-3m$, which implies $\textrm{trace}(\Sigma(\mu))> K(3m+1)$.

This completes the proof of the claim above. Hence, if algorithm $\mathcal{A}$ for the KFSS problem has $r_{\mathcal{A}}(\Sigma)\le K$ for all instances, it is clear that $\mathcal{A}$ can be used to solve the $X3C$ problem by applying it to the above instance.  Specifically, if the answer to the $X3C$ instance is ``yes'', then the optimal sensor selection $\mu^*$ would yield a trace of $\Sigma(\mu^*) = 3m+1$, and thus the algorithm $\mathcal{A}$ would yield a trace no larger than $K(3m+1)$.  On the other hand, if the answer to the $X3C$ instance is ``no'', all sensor selections would yield a trace larger than $K(3m+1)$, and thus so would the sensor selection provided by $\mathcal{A}$.  In either case, the solution provided by $\mathcal{A}$ could be used to find the answer to the given $X3C$ instance.  Since $X3C$ is NP-complete, there is no polynomial-time algorithm for it if $P\neq NP$, and we get a contradiction. This completes the proof of the theorem. 
\end{proof}


\section{Greedy Algorithm}\label{sec:greedy example}
Our result in Theorem~\ref{thm:inapprox} indicates that no polynomial-time algorithm can be guaranteed to yield a solution that is within any constant factor of the optimal solution.  In particular, this result applies to the greedy algorithms that are often studied for sensor selection in the literature \cite{zhang2017sensor}, where sensors are iteratively selected in order to produce the greatest decrease in the error covariance at each iteration.  In particular, it was shown via simulations in \cite{zhang2017sensor} that such algorithms work well in practice (e.g., for randomly generated systems).  In this section, we provide an explicit example showing that greedy algorithms for KFSS can perform arbitrarily poorly, even for small systems (containing only three states).  We will focus on  the simple greedy algorithm for the KFSS problem defined as Algorithm \ref{algorithm: greedy KFSS}, for instances where all sensor costs are equal to $1$, and the sensor budget $B=p_s$ for some $p_s\in\{1,\dots,q\}$ (i.e., up to $p_s$ sensors can be chosen).  For any such instance of the KFSS problem, define $r_{gre}(\Sigma)=\frac{\textrm{trace}(\Sigma_{gre})}{\textrm{trace}(\Sigma_{opt})}$, where $\Sigma_{gre}$ is the solution of the DARE corresponding to the sensors selected by Algorithm \ref{algorithm: greedy KFSS}.

\begin{algorithm}
\textbf{Input:} System dynamics matrix $A$, set of all candidate sensors $\mathcal{Q}$, noise covariances $W$ and $V$, budget $p_s$ \\
\textbf{Output:} A set $\mathcal{S}$ of selected sensors
\caption{Greedy Algorithm for KFSS}\label{algorithm: greedy KFSS}
\begin{algorithmic}[1]

\State $k\gets 1$, $\mathcal{S}\gets\emptyset$
\For{$k\le p_s$}
  \For{$i\in\mathcal{Q}\cap \overline{\mathcal{S}}$}
    \State Calculate $\textrm{trace}(\Sigma(\mathcal{S}\cup\{i\}))$
  \EndFor
    \State $j=\mathop{\arg\min}_i\textrm{trace}(\Sigma(\mathcal{S}\cup\{i\}))$
    \State $\mathcal{S}\gets \mathcal{S}\cup\{j\}$, $k\gets k+1$
\EndFor
\end{algorithmic}
\end{algorithm}

\begin{example}
Consider an instance of the KFSS problem with matrices $W=I_{3\times 3}$, $V=\mathbf{0}_{3\times 3}$, and $A$, $C$ defined as
\begin{equation*}
\label{eq:example matrices KFSS}
  A=
  \begin{bmatrix}
    \lambda_1 & 0 & 0 \\
            0 & 0 & 0\\
            0 & 0 & 0
  \end{bmatrix},
  C=
  \begin{bmatrix}
    1 & h & h \\
    1 & 0 & h \\
    0 & 1 & 1
  \end{bmatrix},\\
\end{equation*}
where $0<|\lambda_1|<1$, $\lambda_1\in\mathbb{R}$ and $h\in\mathbb{R}_{>0}$. In addition, we have the set of candidate sensors $\mathcal{Q}=\{1,2,3\}$, the selection budget $B=2$ and the cost vector $b=[1\ 1\ 1]^T$. 
\label{exp:greedy bad results KFSS}
\end{example}

\begin{theorem}
For the instance of the KFSS problem defined in Example \ref{exp:greedy bad results KFSS}, the ratio $r_{gre}(\Sigma)=\frac{\textrm{trace}(\Sigma_{gre})}{\textrm{trace}(\Sigma_{opt})}$ satisfies
\begin{equation}
\mathop{\lim}_{h\to\infty}r_{gre}(\Sigma)= \frac{2}{3}+\frac{1}{3(1-\lambda_1^2)}.
\label{eq:limit ratio KFSS}
\end{equation}
\label{thm: ratio of special system KFSS}
\end{theorem}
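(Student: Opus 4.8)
The plan is to reduce the objective to the single scalar $(\Sigma(\mu))_{11}$, compute this quantity for every sensor subset of size at most two via the single-row reduction of Lemma~\ref{lemma:estimation error of state 1}, and then read off the greedy and optimal selections. First I would observe that since $A=\textrm{diag}(\lambda_1,0,0)$ has its second and third eigenvalues equal to zero, Lemma~\ref{Lemma:minimum trace of sigma}(c) gives $(\Sigma(\mu))_{22}=(\Sigma(\mu))_{33}=W_{22}=W_{33}=1$ for every sensor selection $\mu$. Hence $\textrm{trace}(\Sigma(\mu))=(\Sigma(\mu))_{11}+2$, so comparing sensor selections amounts to comparing the scalar $(\Sigma(\mu))_{11}$. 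Moreover, Lemma~\ref{Lemma:minimum trace of sigma}(a) gives $(\Sigma(\mu))_{11}\ge W_{11}=1$, so $\textrm{trace}(\Sigma(\mu))\ge 3$ for all $\mu$, with equality exactly when state $1$ can be perfectly recovered.

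Next I would compute $(\Sigma(\mu))_{11}$ for each subset of the three candidate sensors (the three rows of $C$). For a single sensor, rows $C_1=[1\ h\ h]$ and $C_2=[1\ 0\ h]$ are already in the form $[1\ \gamma]$ required by Lemma~\ref{lemma:estimation error of state 1}, yielding $\alpha^2=2h^2$ and $\alpha^2=h^2$ respectively, while $C_3=[0\ 1\ 1]$ has a zero first column, so Lemma~\ref{Lemma:minimum trace of sigma}(d) gives $(\Sigma)_{11}=\frac{1}{1-\lambda_1^2}$. For the pairs I would apply the same reduction used in the proof of Theorem~\ref{thm: KFSS is NP-hard}: elementary row operations (which preserve $\textrm{rowspace}(C(\mu))$, and hence $\Sigma$, since $V=\mathbf{0}$) together with an orthogonal state transformation $T=\textrm{diag}(1,N)$ that leaves $A$ and $W$ unchanged, bringing each $C(\mu)$ into the single-row form $[1\ \gamma\ \mathbf{0}]$ of Lemma~\ref{lemma:estimation error of state 1}. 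This gives effective values $\alpha^2=h^2$ for $\{1,2\}$ and $\alpha^2=h^2/2$ for $\{2,3\}$, whereas for $\{1,3\}$ one has $C_1-hC_3=\mathbf{e}_1$, so $\mathbf{e}_1\in\textrm{rowspace}(C(\{1,3\}))$ and Lemma~\ref{Lemma:minimum trace of sigma}(e) gives $(\Sigma)_{11}=1$.

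With these values in hand, the optimum and the greedy output follow from the monotonicity in Lemma~\ref{lemma:estimation error of state 1}. Since $\Sigma_{11}(\alpha^2)$ is strictly increasing and bounded above by $\frac{1}{1-\lambda_1^2}$, sensor $2$ (smallest $\alpha^2=h^2$, and strictly better than sensor $3$) is the unique first greedy pick; among the two remaining augmentations, $\{2,3\}$ with $\alpha^2=h^2/2$ beats $\{1,2\}$ with $\alpha^2=h^2$, so the greedy algorithm returns $\{2,3\}$ and $\textrm{trace}(\Sigma_{gre})=\Sigma_{11}(h^2/2)+2$. The optimal selection is $\{1,3\}$, which attains the global lower bound $\textrm{trace}(\Sigma_{opt})=3$ established above.

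Finally I would take $h\to\infty$: by Lemma~\ref{lemma:estimation error of state 1}, $\Sigma_{11}(h^2/2)\to\frac{1}{1-\lambda_1^2}$, whence $r_{gre}(\Sigma)\to\frac{\frac{1}{1-\lambda_1^2}+2}{3}=\frac{2}{3}+\frac{1}{3(1-\lambda_1^2)}$, as claimed. I expect the main obstacle to be the pair computations: correctly justifying that row operations plus an orthogonal state transformation reduce each two-row $C(\mu)$ to the scalar form of Lemma~\ref{lemma:estimation error of state 1}, and in particular extracting the effective $\alpha^2$ (especially the factor $1/2$ for $\{2,3\}$), since this value simultaneously forces the greedy choice and fixes the limiting ratio.
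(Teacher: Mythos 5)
Your proposal is correct and follows essentially the same route as the paper's proof: reduce the trace to $(\Sigma(\mu))_{11}+2$ via Lemma~\ref{Lemma:minimum trace of sigma}(c), show greedy picks sensor $2$ then sensor $3$ using the monotonicity of $\Sigma_{11}(\alpha^2)$ from Lemma~\ref{lemma:estimation error of state 1}, identify $\{1,3\}$ as optimal via $C_1-hC_3=\mathbf{e}_1$ and Lemma~\ref{Lemma:minimum trace of sigma}(e), and let $h\to\infty$. The only cosmetic difference is that you extract the pair values $\sigma_{12}=\Sigma_{11}(h^2)$ and $\sigma_{23}=\Sigma_{11}(h^2/2)$ through the orthogonal-transformation reduction of Lemma~\ref{lemma:X3C no solution}, while the paper gets the same numbers by elementary row operations followed by direct computation from the DARE in Eq.~(\ref{eqn:DARE}).
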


\begin{proof}
We first prove that the greedy algorithm defined as Algorithm \ref{algorithm: greedy KFSS} selects sensor $2$ and sensor $3$ in its first and second iterations. Since the only nonzero eigenvalue of $A$ is $\lambda_1$, we know from Lemma $\ref{Lemma:minimum trace of sigma}$(c) that $(\Sigma(\mu))_{22}=1$ and $(\Sigma(\mu))_{33}=1$, $\forall\mu$, which implies that $(\Sigma_{gre})_{22}=1$ and $(\Sigma_{gre})_{33}=1$. Hence, we focus on determining $(\Sigma_{gre})_{11}$.

In the first iteration of the greedy algorithm, suppose the first sensor, i.e., sensor corresponding to $C_1=[1\ h\ h]$, is selected.  Then, using the result in Lemma $\ref{lemma:estimation error of state 1}$, we obtain $(\Sigma(\mu))_{11}\big|_{\mu=[1\ 0\ 0]^T}$, denoted as $\sigma_1$, to be
\begin{equation*}
\sigma_1=\frac{2}{\sqrt[]{(1-\lambda_1^2-\frac{1}{2h^2})^2+\frac{2}{h^2}}+1-\lambda_1^2-\frac{1}{2h^2}}.
\label{eqn:solution for sigma1}
\end{equation*}
Similarly, if the second sensor, i.e., the sensor corresponding to $C_2=[1\ 0\ h]$, is selected in the first iteration of the greedy algorithm, we have $(\Sigma(\mu))_{11}\big|_{\mu=[0\ 1\ 0]^T}$, denoted as $\sigma_2$, to be 
\begin{equation*}
\sigma_2=\frac{2}{\sqrt[]{(1-\lambda_1^2-\frac{1}{h^2})^2+\frac{4}{h^2}}+1-\lambda_1^2-\frac{1}{h^2}}.
\label{eqn:solution for sigma2}
\end{equation*}
If the third sensor, i.e., the sensor corresponding to $C_3=[0\ 1\ 1]$, is selected in the first iteration of the greedy algorithm, the first column of $C(\mu)\big|_{\mu=[0\ 0\ 1]^T}$ is zero, which implies $\sigma_3 \triangleq (\Sigma(\mu))_{11}\big|_{\mu=[0\ 0\ 1]^T}=\frac{1}{1-\lambda_1^2}$ based on Lemma $\ref{Lemma:minimum trace of sigma}$(d).  If we view $\sigma_2$ as a function of $h^2$, denoted as $\sigma(h^2)$, we have $\sigma_1=\sigma(2h^2)$. Since we know from Lemma $\ref{lemma:estimation error of state 1}$ that $\sigma(h^2)$ is an increasing function of $h^2$ and upper bounded by $\frac{1}{1-\lambda_1^2}$, we obtain $\sigma_2<\sigma_1<\sigma_3$, which implies that the greedy algorithm selects the second sensor in its first iteration.

In the second iteration of the greedy algorithm, if the first sensor is selected, we have $C(\mu))\big|_{\mu=[1\ 1\ 0]^T}=
\begin{bmatrix}
    1 & h & h \\
    1 & 0 & h \\
\end{bmatrix}$, on which we perform elementary row operations and obtain $\tilde{C}(\mu)\big|_{\mu=[1\ 1\ 0]^T}=
\begin{bmatrix}
    0 & h & 0 \\
    1 & 0 & h \\
\end{bmatrix}$. By direct computation from Eq. $\eqref{eqn:DARE}$, we obtain $\sigma_{12} \triangleq (\Sigma(\mu))_{11}\big|_{\mu=[1\ 1\ 0]^T}=\sigma_2$. If the third sensor is selected, we have $C(\mu)\big|_{\mu=[0\ 1\ 1]^T}=
\begin{bmatrix}
    1 & 0 & h \\
    0 & 1 & 1 \\
\end{bmatrix}$. By direct computation from Eq. $\eqref{eqn:DARE}$, we obtain $(\Sigma(\mu))_{11}\big|_{\mu=[0\ 1\ 1]^T}$, denoted as $\sigma_{23}$, to be 
\begin{equation*}
\sigma_{23}=\frac{2}{\sqrt[]{(1-\lambda_1^2-\frac{2}{h^2})^2+\frac{8}{h^2}}+1-\lambda_1^2-\frac{2}{h^2}}.
\label{eqn:solution for sigma23}
\end{equation*}
Similar to the argument above, we have $\sigma_{12}=\sigma(h^2)$ and $\sigma_{23}=\sigma(\frac{h^2}{2})$, where $\sigma(\frac{h^2}{2})<\sigma(h^2)$, which implies that the greedy algorithm selects the third sensor in its second iteration. Hence, we have $\textrm{trace}(\Sigma_{gre})=\sigma_{23}+2$.

If $\mu=[1\ 0\ 1]^T$, then $\mathbf{e}_1\in\textrm{rowspace}(C(\mu))$ and thus we know from Lemma $\ref{Lemma:minimum trace of sigma}$(a) and Lemma $\ref{Lemma:minimum trace of sigma}$(e) that $\textrm{trace}(\Sigma(\mu))=3=\textrm{trace}(W)$, which is also the minimum value of $\textrm{trace}(\Sigma(\mu))$ among all possible sensor selections $\mu$. Combining the results above and taking the limit as $h\to\infty$, we obtain the result in Eq. $(\ref{eq:limit ratio KFSS})$.
\end{proof}

Examining Eq. \eqref{eq:limit ratio KFSS}, we see that for the given instance of KFSS, we have $r_{gre}(\Sigma) \rightarrow \infty$ as $h \rightarrow \infty$ and $\lambda_1 \rightarrow 1$.  Thus, $r_{gre}(\Sigma)$ can be made arbitrarily large by choosing the parameters in the instance appropriately.  It is also useful to note that the above behavior holds for any algorithm that outputs a sensor selection that contains sensor $2$ for the above example.

\section{Conclusions}\label{sec:conclusion}

In this paper, we studied the KFSS problem for linear dynamical systems. We showed that this problem is NP-hard and has no constant-factor approximation algorithms, even under the assumption that the system is stable and each sensor has identical cost. We provided an explicit example showing how a greedy algorithm can perform arbitrarily poorly on this problem, even when the system only has three states. Our results shed new insights into the problem of sensor selection for Kalman filtering and show, in particular, that this problem is more difficult than other variants of the sensor selection problem that have submodular cost functions.  Future work on characterizing achievable (non-constant) approximation ratios, or identifying classes of systems that admit near-optimal approximation algorithms, would be of interest.




\section*{Appendix}

\subsection*{Proof of Lemma \ref{Lemma:minimum trace of sigma}:}
Since $A$ and $W$ are diagonal, the system represents a set of $n$ scalar subsystems of the form 
\begin{equation*}
x_i[k+1]=\lambda_i x_i[k] + w_i[k], \forall i\in\{1,\dots,n\},
\label{eqn:decoupled system dynamics}
\end{equation*}
where $x_i[k]$ is the $i$th state of $x[k]$ and $w_i[k]$ is a zero-mean Gaussian noise process with variance $\sigma_{w_i}^2=W_{ii}$. As $A$ is stable, the pair $(A,C(\mu))$ is detectable and the pair $(A,W^{\frac{1}{2}})$ is stabilizable for all sensor selections $\mu$. Thus, the limit $\displaystyle\mathop{\lim}_{k\to\infty}(\Sigma_{k|k-1}(\mu))_{ii}$ exists $\forall i$ (based on Lemma \ref{lemma:Anderson optimal filtering}), and is denoted as $(\Sigma(\mu))_{ii}$.

Proof of (a). Since $A$ and $W$ are diagonal, we know from Eq. $(\ref{eqn:couple priori and posteriori})$ that 
\begin{equation*}
(\Sigma(\mu))_{ii}=\lambda_i^2(\Sigma^*(\mu))_{ii}+W_{ii},
\label{eqn:coupled priori and posteriori diagonal case}
\end{equation*}
which implies $(\Sigma(\mu))_{ii}\ge W_{ii}$, $\forall i\in\{1,\dots,n\}$. Moreover, it is easy to see that $(\Sigma(\mu))_{ii}\le (\Sigma(\mathbf{0}))_{ii}$, $\forall i\in\{1,\dots,n\}$. Since $C(\mathbf{0})=\mathbf{0}$, we obtain from Eq. $(\ref{eqn:DARE})$
\begin{equation}
\Sigma(\mathbf{0})=A\Sigma(\mathbf{0})A^T+W.
\end{equation}
which implies that $(\Sigma(\mathbf{0}))_{ii}=\frac{W_{ii}}{1-\lambda_i^2}$ since $A$ is diagonal. Hence, $(\Sigma(\mu))_{ii}\le\frac{W_{ii}}{1-\lambda_i^2}$, $\forall i\in\{1,\dots,n\}$.

Proof of (b). Letting $W_{ii}=0$ in inequality $(\ref{eqn:bound for priori sigma})$, we obtain $(\Sigma(\mu))_{ii}=0$.

Proof of (c). Letting $\lambda_i=0$ in inequality $(\ref{eqn:bound for priori sigma})$, we obtain  $(\Sigma(\mu))_{ii}=W_{ii}$.

Proof of (d). Assume without loss of generality that the first column of $C(\mu)$ is zero, since we can simply renumber the states to make this the case without affecting the trace of the error covariance matrix. Hence, we have  $C(\mu)$ of the form
\begin{equation*}
C(\mu)=\begin{bmatrix}
\mathbf{0} & C_1(\mu)
\end{bmatrix}.
 \end{equation*}
Moreover, since $A$ and $W$ are diagonal and $V=\mathbf{0}$, we can obtain from Eq. $(\ref{eqn:DARE})$ that $\Sigma(\mu)$ is of the form
\begin{equation*}
\Sigma(\mu)=\begin{bmatrix}
\Sigma_1(\mu) & \mathbf{0}\\
\mathbf{0} & \Sigma_2(\mu)
\end{bmatrix},
\end{equation*}
where $\Sigma_1(\mu)=(\Sigma(\mu))_{11}$ and satisfies 
\begin{equation*}
(\Sigma(\mu))_{11}=\lambda_i^2(\Sigma(\mu))_{11}+W_{11}, 
\end{equation*}
which implies $(\Sigma(\mu))_{11}=\frac{W_{11}}{1-\lambda_1^2}$.

Proof of (e). Similar to the proof of (d), we can assume without loss of generality that $\mathbf{e}_{1}\in\textrm{rowspace}(C(\mu))$. If we further perform elementary row operations on $C(\mu)$,\footnote{Note that since we assume $V=\mathbf{0}$, it is easy to see that the Kalman filter gives the same results if we perform any elementary row operation on $C(\mu)$.} we get a matrix $\tilde{C}(\mu)$ of the form 
\begin{equation*}
\tilde{C}(\mu)=\begin{bmatrix}
1 & \mathbf{0}\\
\mathbf{0} & \tilde{C}_1(\mu)
\end{bmatrix}.
 \end{equation*}
Moreover, since $A$ and $W$ are diagonal and $V=\mathbf{0}$, we can obtain from Eq. $(\ref{eqn:DARE})$ that $\Sigma(\mu)$ is of the form
\begin{equation*}
\Sigma(\mu)=\begin{bmatrix}
\Sigma_1(\mu) & \mathbf{0}\\
\mathbf{0} & \Sigma_2(\mu)
\end{bmatrix},
\end{equation*}
where $\Sigma_1(\mu)=(\Sigma(\mu))_{11}$ and satisfies
\begin{equation*}
(\Sigma(\mu))_{11}=\lambda_1^2(\Sigma(\mu))_{11}+W_{11}-\lambda_1^2(\Sigma(\mu))_{11},
\end{equation*}
which implies $(\Sigma(\mu))_{11}=W_{11}$.\hfill\QED

\subsection*{Proof of Lemma \ref{lemma:estimation error of state 1}:}

Since $A=\textrm{diag}(\lambda_1,0,\dots,0)$ and $W=I_{n\times n}$, we obtain $x_i[k+1]=w_i[k]$, $\forall i\in\{2,\dots,n\}$ from Eq. $(\ref{eqn:system dynamics})$, where $w_i[k]'s$ are uncorrelated white Gaussian noise sequences with $\mathbb{E}[(w_i[k])^2]=1$. Hence, we have from Eq. $(\ref{eqn:all sensors measurements})$
\begin{equation*}
y[k]=[1\ \mathbf{0}_{1\times n-1}]x[k]+v'[k]=x_1[k]+v'[k],
\end{equation*}
where $v'[k]\triangleq\displaystyle\sum_{i=1}^{n-1}\gamma_i w_{i+1}[k-1]$, which is a white Gaussian noise process with $\mathbb{E}[(v'[k])^2]=\lVert\gamma\rVert_2^2$, denoted as $\alpha^2$. Hence, to compute the MSEE of state $1$ of the Kalman filter, i.e., $\Sigma_{11}$, we can consider a scalar discrete-time linear system with $A=\lambda_1$, $C=1$, $W=1$ and $V=\alpha^2$, and obtain from Eq. $(\ref{eqn:DARE})$, the scalar DARE 
\begin{equation}
\Sigma_{11}=\lambda_1^2(1-\frac{\Sigma_{11}}{\alpha^2+\Sigma_{11}})\Sigma_{11}+1.
\label{eqn:scalar DARE for state 1}
\end{equation}
Solving for $\Sigma_{11}$ in Eq. $(\ref{eqn:scalar DARE for state 1})$ and omitting the negative solution we obtain Eq. $(\ref{eqn: mean square estimation error of state 1})$. 

To show that $\Sigma_{11}$ is strictly increasing of $\alpha^2\in\mathbb{R}_{\ge0}$, we can use the result of Lemma $6$ in \cite{zhang2017sensor}. For a discrete-time linear system as defined in Eq. $(\ref{eqn:system dynamics})$ and Eq. $(\ref{eqn:all sensors measurements})$, given $A=\lambda_1$ and $W=1$, suppose we have two sensors with the measurement matrices as $C_1=C_2=1$ and the variances of the measurement noise as $V_1=\alpha_1^2$ and $V_2=\alpha_2^2$ (assumed to be Gaussian noise). Then, we have the sensor information matrix, as defined earlier in this paper, of these two sensors as $R_1=\frac{1}{\alpha_1^2}$ and $R_2=\frac{1}{\alpha_2^2}$. If $\alpha_1^2>\alpha_2^2$, then we know from Lemma $6$ in \cite{zhang2017sensor} that $\Sigma_{11}(\alpha_1^2)<\Sigma_{11}(\alpha_2^2)$. Hence, $\Sigma_{11}(\alpha^2)$ is a strictly increasing function of $\alpha^2\in\mathbb{R}_{\ge0}$. For $\alpha>0$, we can rewrite Eq. $(\ref{eqn: mean square estimation error of state 1})$ as
\begin{equation}
\Sigma_{11}(\alpha^2)=\frac{2}{\sqrt[]{(1-\lambda_1^2-\frac{1}{\alpha^2})^2+\frac{4}{\alpha^2}}+1-\lambda_1^2-\frac{1}{\alpha^2}}.
\label{eqn:rewrite mean square estimation error of state 1}
\end{equation}
By letting $\alpha\to\infty$ in Eq. $(\ref{eqn:rewrite mean square estimation error of state 1})$, we obtain $\displaystyle\mathop{\lim}_{\alpha\to\infty}\Sigma_{11}(\alpha^2)=\frac{1}{1-\lambda_1^2}$.\hfill\QED

\begin{lemma}
Consider an instance of $X3C$: a finite set $D$ with $|D|=3m$, and a collection $\mathcal{C}=\{c_1,\dots,c_{\tau}\}$ of $\tau$ $3$-element subsets of $D$, where $\tau\ge m$. For each element $c_i \in \mathcal{C}$, define the column vector $g_i \in \mathbb{R}^{3m}$ to encode which elements of $D$ are contained in $c_i$, i.e., for $i \in \{1, 2, \ldots, \tau\}$ and $j \in \{1, 2, \ldots, 3m\}$, $(g_i)_j = 1$ if element $j$ of set $D$ is in $c_i$, and $(g_i)_j = 0$ otherwise. Define the matrix 
$G=\left[\begin{matrix} g_1 & \cdots & g_{\tau}\end{matrix}\right]$. For any $l\le m$ ($l\in\mathbb{Z}$) and $\mathcal{L}\triangleq\{i_1,\dots,i_l\}\subseteq\{1,\dots,\tau\}$, define $G_{\mathcal{L}}=\left[\begin{matrix} g_{i_1} & \cdots & g_{i_l}\end{matrix}\right]$ and denote $\textrm{rank}(G_{\mathcal{L}})=r_{\mathcal{L}}$.\footnote{We drop the subscript $\mathcal{L}$ on $r$ for notational simplicity.} Furthermore, define $d=[1\cdots 1]^T\in\mathbb{R}^{3m}$. If the answer to the $X3C$ problem is ``no'', then for all $\mathcal{L}$ with $|\mathcal{L}|\le m$, there exists an orthogonal matrix $N\in\mathbb{R}^{3m\times 3m}$ such that
\begin{equation}
\begin{bmatrix}
d^T \\
G_{\mathcal{L}}^T
\end{bmatrix}N=\begin{bmatrix}
\mathbf{\gamma} & \mathbf{\beta}\\
\mathbf{0} & \tilde{G}_{\mathcal{L}}^T
\end{bmatrix}, 
\label{eqn: X3C no solution}
\end{equation}
where $\tilde{G}_{\mathcal{L}}^T\in\mathbb{R}^{l\times r}$ is of full column rank, $\gamma\in\mathbb{R}^{1\times(3m-r)}$ and $\omega\ge1$ ($\omega\in\mathbb{Z}$) elements of $\mathbf{\gamma}$ are $1$'s , and $\beta\in\mathbb{R}^{1\times r}$. Further elementary row operations on $\begin{bmatrix}
\mathbf{\gamma} & \mathbf{\beta}\\
\mathbf{0} & \tilde{G}_{\mathcal{L}}^T
\end{bmatrix}$ transform it into the form $\begin{bmatrix}
\mathbf{\gamma} & \mathbf{0}\\
\mathbf{0} & \tilde{G}_{\mathcal{L}}^T
\end{bmatrix}$. 
\label{lemma:X3C no solution}
\end{lemma}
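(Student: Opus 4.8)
The plan is to construct $N$ explicitly from the geometry of the row space of $G_{\mathcal{L}}^T$, exploiting the fact that elements of $D$ left uncovered by $\mathcal{L}$ produce zero columns in $G_{\mathcal{L}}^T$. First I would record the combinatorial fact underlying the whole construction: the columns of $G_{\mathcal{L}}^T$ (equivalently the rows of $G_{\mathcal{L}}$) are indexed by the elements of $D$, and the $j$th column of $G_{\mathcal{L}}^T$ is zero precisely when element $j$ is covered by none of the subsets $c_{i_1},\dots,c_{i_l}$. Thus the number of zero columns of $G_{\mathcal{L}}^T$ equals the number $\omega$ of uncovered elements. When the answer to $X3C$ is ``no'' and $l=|\mathcal{L}|\le m$, we have $\omega\ge 1$: for $l<m$ the $l$ subsets cover at most $3l<3m$ elements, while for $l=m$ covering all $3m$ elements with $m$ three-element sets would force an exact cover, contradicting the hypothesis. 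Letting $j_1,\dots,j_\omega$ denote the indices of the zero columns, we have $G_{\mathcal{L}}^T\mathbf{e}_{j_k}^T=\mathbf{0}$, so each standard basis vector $\mathbf{e}_{j_k}^T$ lies in $S^\perp$, where $S\triangleq\mathrm{rowspace}(G_{\mathcal{L}}^T)\subseteq\mathbb{R}^{3m}$ has dimension $r$.

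Next I would build $N$ to respect this splitting. Since $\mathbf{e}_{j_1}^T,\dots,\mathbf{e}_{j_\omega}^T$ are orthonormal and lie in the $(3m-r)$-dimensional subspace $S^\perp$, I would extend them to an orthonormal basis of $S^\perp$ and then append an orthonormal basis of $S$; collecting these $3m$ vectors as columns yields an orthogonal $N$ whose first $3m-r$ columns span $S^\perp$ (with $\mathbf{e}_{j_1}^T,\dots,\mathbf{e}_{j_\omega}^T$ placed first) and whose last $r$ columns span $S$. Right-multiplying $G_{\mathcal{L}}^T$ by $N$ annihilates the first block, since every row of $G_{\mathcal{L}}^T$ lies in $S$ and is therefore orthogonal to $S^\perp$, producing $\begin{bmatrix}\mathbf{0} & \tilde{G}_{\mathcal{L}}^T\end{bmatrix}$; because $N$ is invertible, $\mathrm{rank}(\tilde{G}_{\mathcal{L}}^T)=\mathrm{rank}(G_{\mathcal{L}}^T)=r$, and as $\tilde{G}_{\mathcal{L}}^T\in\mathbb{R}^{l\times r}$ this is exactly full column rank. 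For the first row, $d^T N=\begin{bmatrix}\gamma & \beta\end{bmatrix}$, and because $d$ is the all-ones vector the first $\omega$ entries of $\gamma$ equal $d^T\mathbf{e}_{j_k}^T=1$, which delivers the required $\omega$ ones in $\gamma$.

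Finally I would justify the elementary-row-operation step. Because $\tilde{G}_{\mathcal{L}}^T$ has full column rank $r$ and $r\le l$ rows, its $l$ rows span all of $\mathbb{R}^r$; hence there is a vector $v\in\mathbb{R}^l$ with $\beta=v^T\tilde{G}_{\mathcal{L}}^T$. Adding $-v^T$ times the lower block $\begin{bmatrix}\mathbf{0} & \tilde{G}_{\mathcal{L}}^T\end{bmatrix}$ to the top row replaces $\beta$ by $\mathbf{0}$ while leaving $\gamma$ and the lower block untouched, yielding $\begin{bmatrix}\gamma & \mathbf{0}\\ \mathbf{0} & \tilde{G}_{\mathcal{L}}^T\end{bmatrix}$ as claimed. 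I expect the only genuinely delicate point to be the assertion that the first $\omega$ entries of $\gamma$ are exactly $1$: this is where the argument uses both that $d$ is all-ones and that the zero columns of $G_{\mathcal{L}}^T$ are standard basis directions, so that the uncovered coordinates are carried over verbatim by the orthogonal change of basis. Everything else is bookkeeping about ranks and orthogonal complements, together with the remark (used already in Lemma~\ref{Lemma:minimum trace of sigma}(e)) that with $V=\mathbf{0}$ such elementary row operations on the measurement matrix do not alter the steady-state error covariance.
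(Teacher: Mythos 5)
Your proposal is correct and takes essentially the same route as the paper's own proof: both construct $N$ by placing the standard basis vectors $\mathbf{e}_{j_1}^T,\dots,\mathbf{e}_{j_\omega}^T$ indexed by the uncovered elements into an orthonormal basis of $\mathrm{nullspace}(G_{\mathcal{L}}^T)$ (your $S^\perp$), appending an orthonormal basis of $\mathrm{rowspace}(G_{\mathcal{L}}^T)$, and then clearing $\beta$ via row operations justified by the full column rank of $\tilde{G}_{\mathcal{L}}^T$. The only differences are added detail on your part (the explicit counting argument that $\omega\ge 1$, including the $l=m$ case, and the explicit vector $v$ with $\beta=v^T\tilde{G}_{\mathcal{L}}^T$), which the paper leaves implicit.
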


\begin{proof}
Assume without loss of generality that there are no identical subsets in $\mathcal{C}$. Since $\textrm{rank}(G_{\mathcal{L}}^T)=r$, the dimension of $\textrm{nullspace}(G_{\mathcal{L}}^T)$ is $3m-r$. We choose an orthonormal basis of $\textrm{nullspace}(G_{\mathcal{L}}^T)$ and let it form the first $3m-r$ columns of $N$, denoted as $N_1$. Then, we choose an orthonormal basis of $\textrm{rowspace}(G_{\mathcal{L}}^T)$ and let it form the rest of the $r$ columns of $N$, denoted as $N_2$. Hence, $N=\begin{bmatrix}N_1 & N_2\end{bmatrix}\in\mathbb{R}^{3m\times 3m}$ is an orthogonal matrix. Furthermore, since the answer to the $X3C$ problem is ``no'', for any union of $l\le m$ ($l\in\mathbb{Z}$) subsets in $\mathcal{C}$, denoted as $\mathcal{C}_l$, there exist $\omega\ge1$ $(\omega\in\mathbb{Z})$ elements in $D$ that are not covered by $\mathcal{C}_l$, i.e., $G_{\mathcal{L}}^T$ has $\omega$ zero columns. Denote these as the $j_1$th, $\dots$, $j_{\omega}$th columns of $G_{\mathcal{L}}^T$, where $\{j_1,\dots,j_{\omega}\}\subseteq\{1,\dots,3m\}$. Hence, we can always choose $\mathbf{e}_{j_1},\dots,\mathbf{e}_{j_{\omega}}$ to be in the orthonormal basis of $\textrm{nullspace}(G_{\mathcal{L}}^T)$, i.e., as columns of $N_1$. Constructing $N$ in this way, we have $G_{\mathcal{L}}^T N_1=\mathbf{0}$ and $G_{\mathcal{L}}^T N_2=\tilde{G}_{\mathcal{L}}^T$, where $\tilde{G}_{\mathcal{L}}^T\in\mathbb{R}^{l\times r}$ is of full column rank since the columns of $N_2$ form an orthonormal basis of $\textrm{rowspace}(G_{\mathcal{L}}^T)$ and $r\le l$. Moreover, we have $d^T N_1=\mathbf{\gamma}$ and $d^T N_2=\mathbf{\beta}$, where $\omega$ elements of $\mathbf{\gamma}$ are $1$'s (since $d^T\mathbf{e}_{j_s}^T=1$, $\forall s\in\{1,\dots,\omega\}$). Combining these results, we obtain Eq. $(\ref{eqn: X3C no solution})$. Since $\tilde{G}_{\mathcal{L}}^T\in\mathbb{R}^{l\times r}$ is of full column rank, we can perform elementary row operations on $\begin{bmatrix}
\mathbf{\gamma} & \mathbf{\beta}\\
\mathbf{0} & \tilde{G}_{\mathcal{L}}^T
\end{bmatrix}$ and obtain $\begin{bmatrix}
\mathbf{\gamma} & \mathbf{0}\\
\mathbf{0} & \tilde{G}_{\mathcal{L}}^T
\end{bmatrix}$.
\end{proof}

\bibliographystyle{IEEEtran}
\bibliography{main}

\end{document}